  \def\kk{{\mathbb k}}
  \def\NN{{\mathbb N}}
  \def\ZZ{{\mathbb Z}}
  \def\RR{{\mathbb R}}
  \def\kk{{\mathbf k}}
  \def\aA{{\mathcal A}}
  \def\cC{{\mathcal C}}
  \def\dD{{\mathcal D}}
  \def\fF{{\mathcal F}}
  \def\des{{\rm des}}
  \def\Des{{\rm Des}}
  \def\asc{{\rm asc}}
  \def\exc{{\rm exc}}
  \def\iexc{{\rm iexc}}
  \def\inte{{\rm int}}
  \def\link{{\rm link}}
  \def\sd{{\rm sd}}
  \def\sm{\smallsetminus}
  \theoremstyle{plain}
    \newtheorem{theorem}{Theorem}[section]
    \newtheorem{proposition}[theorem]{Proposition}
    \newtheorem{lemma}[theorem]{Lemma}
    \newtheorem{corollary}[theorem]{Corollary}
    \newtheorem{conjecture}[theorem]{Conjecture}
  \theoremstyle{definition}
    \newtheorem{definition}[theorem]{Definition}
    \newtheorem{example}[theorem]{Example}
    \newtheorem{remark}[theorem]{Remark}
  \numberwithin{equation}{section}
\begin{document}

\title[A decomposition of the derangement polynomial of type $B$]{A symmetric
unimodal decomposition of the derangement polynomial of type $B$}

  \author{Christos~A.~Athanasiadis}
  \author{Christina~Savvidou}

  \address{Department of Mathematics
          (Division of Algebra-Geometry) \\
          University of Athens \\
          Panepistimioupolis, Athens 15784 \\
          Hellas (Greece)}
  \email{caath@math.uoa.gr, savvtina@math.uoa.gr}

\date{March 15, 2013}
\thanks{2000 \textit{Mathematics Subject Classification.} Primary 05A05; \, Secondary
05A15, 05E45.}
\keywords{Signed permutation, Eulerian polynomial, derangement polynomial, excedance,
barycentric subdivision, local $h$-vector, $\gamma$-vector}

  \begin{abstract}
    The derangement polynomial $d_n (x)$ for the symmetric group enumerates
    derangements by the number of excedances. The derangement polynomial $d^B_n
    (x)$ for the hyperoctahedral group is a natural type $B$ analogue. A new
    combinatorial formula for this polynomial is given in this paper. This formula
    implies that $d^B_n (x)$ decomposes as a sum of two nonnegative, symmetric and
    unimodal polynomials whose centers of symmetry differ by a half and thus provides
    a new transparent proof of its unimodality. A geometric interpretation, analogous
    to Stanley's interpretation of $d_n (x)$ as the local $h$-polynomial of the
    barycentric subdivision of the simplex, is given to one of the summands of
    this decomposition. This interpretation leads to a unimodal decomposition of the 
    Eulerian polynomial of type $B$ whose summands can be expressed in terms of the 
    Eulerian polynomial of type $A$. The various decomposing polynomials introduced 
    here are also studied in terms of recurrences, generating functions, 
    combinatorial interpretations, expansions and real-rootedness.
  \end{abstract}

  \maketitle

\section{Introduction and results}
\label{sec:intro}

The derangement polynomial of order $n$ is an interesting $q$-analogue of the
number of derangements (elements without fixed points) in the symmetric group
$\mathfrak{S}_n$. It is defined by the formula
  \begin{equation} \label{eq:dndef}
    d_n (x) \ = \ \sum_{w \in \dD_n} \ x^{\exc(w)},
  \end{equation}
where $\exc(w)$ is the number of excedances (see Section \ref{sec:perms} for
missing definitions) of $w \in \mathfrak{S}_n$ and $\dD_n$ is the set of
derangements in $\mathfrak{S}_n$. The polynomial $d_n(x)$, first studied by
Brenti \cite{Bre90} in the context of symmetric functions, has a number of
pleasant properties. For instance, it has symmetric and unimodal coefficients
\cite{Bre90} (see also \cite[Section~4]{AS12} \cite[Section~5]{SW10}
\cite{Ste92}) and only real roots \cite{Zh95}. It can also be expressed as
  \begin{equation} \label{eq:dnA}
    d_n(x) \ = \ \sum_{k=0}^n \, (-1)^{n-k} \binom{n}{k} A_k(x),
  \end{equation}
where $A_k(x) = \sum_{w \in \mathfrak{S}_k} x^{\des(w)}$ is the $k$-th Eulerian
polynomial.

We will be concerned with a natural analogue of $d_n(x)$ for the hyperoctahedral
group $B_n$ of signed permutations, introduced and studied independently by Chen,
Tang and Zhao \cite{CTZ09} and by Chow \cite{Ch09}. It is defined by the formula
  \begin{equation} \label{eq:dnBdef}
    d^B_n (x) \ = \ \sum_{w \in \dD^B_n} \ x^{\exc_B(w)},
  \end{equation}
where $\exc_B(w)$ is the number of type $B$ excedances of $w \in B_n$, introduced
by Brenti \cite{Bre94}, and $\dD^B_n$ is the set of derangements in $B_n$.

The derangement polynomial $d^B_n(x)$ shares most of the main properties of $d_n
(x)$. For instance, it is real-rooted \cite{CTZ09, Ch09}, hence it has unimodal
(but not symmetric) coefficients, and satisfies the analogue
  \begin{equation} \label{eq:dnB}
    d^B_n(x) \ = \ \sum_{k=0}^n \, (-1)^{n-k} \binom{n}{k} B_k(x)
  \end{equation}
of (\ref{eq:dnA}), where $B_k(x) = \sum_{w \in B_k} x^{\des_B(w)}$ is the $k$-th
Eulerian polynomial of type $B$. Our first main result is the following
combinatorial formula for $d^B_n(x)$.

\begin{theorem} \label{thm:main}
  We have
    \begin{equation} \label{eq:main}
      d_n^B(x) \ = \ \sum \ {n \choose r_0, r_1,\dots,r_k} \, x^{\lfloor
      \frac{k+1}{2} \rfloor} \, d_{r_0}(x) \, A_{r_1}(x) \cdots A_{r_k}(x)
    \end{equation}
  for $n \in \NN$, where $A_0 (x) = 0$, $d_0 (x) = 1$ and the sum ranges over all
  $k \in \NN$ and over all sequences $(r_0, r_1,\dots,r_k)$ of nonnegative integers
  which sum to $n$.
\end{theorem}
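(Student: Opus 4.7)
The plan is to pass to exponential generating functions and reduce Theorem~\ref{thm:main} to a closed-form identity between the EGFs of the Eulerian polynomials of types $A$ and $B$. With the standard conventions $A_0(x) = B_0(x) = d_0(x) = d^B_0(x) = 1$, write $A(z,x)$, $B(z,x)$, $D(z,x)$, $D^B(z,x)$ for the EGFs of $A_n(x)$, $B_n(x)$, $d_n(x)$, $d^B_n(x)$, respectively. A direct binomial inversion of (\ref{eq:dnA}) and (\ref{eq:dnB}) translates those identities into $D(z,x) = e^{-z} A(z,x)$ and $D^B(z,x) = e^{-z} B(z,x)$.

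The right-hand side of (\ref{eq:main}) is a multinomial convolution of $d_{\bullet}$ with $k$ copies of $A_{\bullet}$, weighted by $x^{\lfloor (k+1)/2 \rfloor}$, in which the convention $A_0(x) = 0$ precisely forces the $k$ auxiliary parts to be nonempty. Its EGF therefore equals
\[
  D(z,x) \sum_{k \geq 0} x^{\lfloor (k+1)/2 \rfloor} \bigl( A(z,x) - 1 \bigr)^k \ = \ D(z,x) \cdot \frac{1 + x \bigl( A(z,x) - 1 \bigr)}{1 - x \bigl( A(z,x) - 1 \bigr)^2},
\]
the last step being the elementary identity $\sum_{k \geq 0} x^{\lfloor (k+1)/2 \rfloor} u^k = (1 + xu)/(1 - xu^2)$. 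Comparing with $D^B(z,x) = e^{-z} B(z,x)$ and cancelling the common factor $e^{-z}$, Theorem~\ref{thm:main} is equivalent to the single identity
\[
  B(z,x) \ = \ A(z,x) \cdot \frac{1 + x \bigl( A(z,x) - 1 \bigr)}{1 - x \bigl( A(z,x) - 1 \bigr)^2}.
\]

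To verify this identity, substitute the classical closed forms $A(z,x) = (x-1)/(x - Y)$ and $B(z,x) = (x-1) Y / (x - Y^2)$, where $Y := e^{(x-1) z}$. A short calculation yields $A - 1 = (Y - 1)/(x - Y)$, and from this the pleasant reductions
\[
  1 + x(A - 1) \ = \ Y A \qquad \text{and} \qquad 1 - x(A - 1)^2 \ = \ \frac{(x-1)(x - Y^2)}{(x - Y)^2}.
\]
Combining with $A^2 (x-Y)^2 = (x-1)^2$, the right-hand expression collapses to $(x-1) Y / (x - Y^2) = B(z,x)$, which completes the proof.

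The only nontrivial ingredient is the classical EGF for $B_n(x)$; the rest is elementary and self-contained. If one preferred a bijective proof, the natural attempt would be to decompose a signed derangement $w \in \dD^B_n$ into a distinguished subset of size $r_0$ supporting a derangement of $\mathfrak{S}_{r_0}$, together with an ordered sequence of $k$ nonempty blocks (suggested, say, by maximal sign-constant runs of $w$) each carrying an ordinary permutation, with the factor $x^{\lfloor (k+1)/2 \rfloor}$ accounting for the extra type $B$ excedances forced at sign boundaries. Ensuring that such a bijection matches $\exc_B$ with $\exc$ and $\des$ on the individual pieces on the nose would almost certainly be the main obstacle, and is why I would favor the EGF route, which sidesteps this combinatorial bookkeeping entirely.
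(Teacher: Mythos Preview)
Your generating-function argument is correct and is essentially the paper's second proof: the paper computes the EGF of the right-hand side as $\dD(t)\,\dfrac{1+x\aA(t)}{1-x(\aA(t))^2}$ (where $\aA(t)=A(t,x)-1$ in your notation) and checks it against the known closed form $\dfrac{(1-x)e^{xt}}{e^{2xt}-xe^{2t}}$ for $\sum d^B_n(x)\,t^n/n!$, which is equivalent to your identity $B = A\cdot\dfrac{1+x(A-1)}{1-x(A-1)^2}$ after cancelling the common factor $e^{-t}$. The bijective route you sketch at the end is in fact carried out as the paper's first proof: one writes $w\in\dD^B_n$ in standard cycle form (smallest element first), peels off the leading all-positive cycles as the derangement $\sigma_0$, and cuts the remaining word at sign changes into blocks $\sigma_1,\dots,\sigma_k$, whereupon $\iexc_B(w)=\iexc(\sigma_0)+\sum_i f(\sigma_i)+\lfloor(k+1)/2\rfloor$ with $f=\asc$ or $\des$ according to the parity of $i$.
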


Chow \cite[Section 4]{Ch09} gave an additional proof of the unimodality of $d^B_n
(x)$ by expressing it as a sum of certain nonnegative unimodal polynomials, defined
by a symmetric function identity, of a common mode. Theorem \ref{thm:main} implies
that $d^B_n (x)$ can be written as a sum of two polynomials with nonnegative,
symmetric and unimodal coefficients, whose centers of symmetry differ by a half, and
thus provides a new proof of its unimodality, as we now explain. Since $d^B_n (x)$
has degree $n$ and zero constant term, it can be written uniquely in the form
  \begin{equation} \label{eq:dnBsymsum}
    d^B_n(x) \ = \ f^+_n (x) \, + \, f^-_n (x),
  \end{equation}
where $f^+_n (x)$ and $f^-_n (x)$ are polynomials of degrees at most $n-1$ and $n$,
respectively, satisfying
  \begin{eqnarray}
  \label{eq:symf+}  f^+_n (x) & = & x^n f^+_n (1/x) \\
  \label{eq:symf-}  f^-_n (x) & = & x^{n+1} f^-_n (1/x)
\end{eqnarray}
(see, for instance, \cite[Lemma 2.4]{BSt10} for this elementary fact). For the
first few values of $n$ we have
  $$ f^+_n (x) \ = \ \begin{cases}
    1, \ \ & \text{if \ $n=0$} \\
    0, \ \ & \text{if \ $n=1$} \\
    3x, \ \ & \text{if \ $n=2$} \\
    7x + 7x^2, \ \ & \text{if \ $n=3$} \\
    15x + 87x^2 + 15x^3, \ \ & \text{if \ $n=4$} \\
    31x + 551x^2+ 551x^3 + 31x^4, \ \ & \text{if \ $n=5$} \\
    63x + 2803x^2 + 8243x^3 + 2803x^4 + 63x^5, \ \ & \text{if \ $n=6$} \\
    127x + 12867x^2 + 84827x^3 + 84827x^4 + 12867x^5 + 127x^6, \ \ & \text{if \
    $n=7$} \end{cases} $$
and
  $$ f^-_n (x) \ = \ \begin{cases}
    0, \ \ & \text{if \ $n=0$} \\
    x, \ \ & \text{if \ $n=1$} \\
    x + x^2, \ \ & \text{if \ $n=2$} \\
    x + 13x^2 + x^3, \ \ & \text{if \ $n=3$} \\
    x + 57x^2 + 57x^3 + x^4, \ \ & \text{if \ $n=4$} \\
    x + 201x^2 + 761x^3 + 201x^4 + x^5, \ \ & \text{if \ $n=5$} \\
    x + 653x^2 + 6333x^3 + 6333x^4 + 653x^5 + x^6, \ \ & \text{if \ $n=6$} \\
    x + 2045x^2 + 42757x^3 + 106037x^4 + 42757x^5 + 2045x^6 + x^7, \ \ & \text{if \
    $n=7$.} \end{cases} $$
The following information for the polynomials $f^+_n (x)$ and $f^-_n (x)$ and for
$d^B_n (x)$ can be derived from (\ref{eq:main}).

\begin{corollary} \label{cor:main}
  We have
    \begin{equation} \label{eq:main+}
      f^+_n (x) \ = \ \sum \ {n \choose r_0, r_1,\dots,r_{2k}} \, x^k \, d_{r_0}
     (x) \, A_{r_1} (x) \cdots A_{r_{2k}}(x)
    \end{equation}
  and
    \begin{equation} \label{eq:main-}
      f^-_n (x) \ = \ \sum \ {n \choose r_0, r_1,\dots,r_{2k+1}} \, x^{k+1} \,
      d_{r_0}(x) \, A_{r_1} (x) \cdots A_{r_{2k+1}}(x)
    \end{equation}
  for $n \in \NN$, where the sums range over all $k \in \NN$ and over all sequences
  $(r_0, r_1,\dots,r_{2k})$ {\rm (}respectively, $(r_0, r_1,\dots,r_{2k+1})${\rm )}
  of nonnegative integers which sum to $n$.
  Moreover, $f^+_n (x)$ and $f^-_n (x)$ are $\gamma$-nonnegative, meaning there
  exist nonnegative integers $\xi^+_{n,i}$ and $\xi^-_{n,i}$ such that

    \begin{equation} \label{eq:main++}
      f^+_n (x) \ = \ \sum_{i=0}^{\lfloor n/2 \rfloor} \ \xi^+_{n,i} \, x^i
      (1+x)^{n-2i}
    \end{equation}
  and
    \begin{equation} \label{eq:main--}
      f^-_n (x) \ = \ \sum_{i=0}^{\lfloor (n+1)/2 \rfloor} \ \xi^-_{n,i} \, x^i
      (1+x)^{n+1-2i}.
    \end{equation}

  In particular, $f^+_n (x)$ and $f^-_n (x)$ are symmetric and unimodal, with
  center of symmetry $n/2$ and $(n+1)/2$, respectively, and $d_n^B(x)$ is
  unimodal with a peak at $\lfloor (n+1)/2 \rfloor$.
\end{corollary}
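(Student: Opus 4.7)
The strategy is to split the sum in \eqref{eq:main} according to the parity of the index $k$, identify the two resulting pieces with $f^+_n$ and $f^-_n$ by a symmetry and degree check, and then deduce $\gamma$-nonnegativity from the classical $\gamma$-expansions of the factors $A_m(x)$ and $d_m(x)$.

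\textbf{Step 1 (the formulas \eqref{eq:main+} and \eqref{eq:main-}).} Splitting the right-hand side of \eqref{eq:main} according to whether $k = 2m$ is even or $k = 2m+1$ is odd, and using $\lfloor (k+1)/2 \rfloor = m$ and $m+1$ respectively, produces exactly the right-hand sides of \eqref{eq:main+} and \eqref{eq:main-}; call these sums $g^+_n(x)$ and $g^-_n(x)$. To see that $g^\pm_n = f^\pm_n$, I verify the symmetry and degree conditions uniquely characterising the summands of \eqref{eq:dnBsymsum}. The standard palindromic relations $A_{r_j}(x) = x^{r_j - 1} A_{r_j}(1/x)$ for $r_j \geq 1$ and $d_{r_0}(x) = x^{r_0} d_{r_0}(1/x)$ combine to give $P(x) = x^{n-k} P(1/x)$ for $P := d_{r_0} A_{r_1} \cdots A_{r_k}$; a short calculation then yields that $x^m P(x)$ satisfies \eqref{eq:symf+} when $k = 2m$ and $x^{m+1} P(x)$ satisfies \eqref{eq:symf-} when $k = 2m+1$. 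Straightforward estimates give $\deg g^+_n \leq n-1$ and $\deg g^-_n \leq n$, so the uniqueness of the decomposition \eqref{eq:dnBsymsum} forces $g^\pm_n = f^\pm_n$.

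\textbf{Step 2 ($\gamma$-nonnegativity).} I invoke the classical $\gamma$-positive expansions
$$ A_m(x) = \sum_{i \geq 0} a_{m,i} \, x^i (1+x)^{m-1-2i}, \qquad d_m(x) = \sum_{i \geq 0} b_{m,i} \, x^i (1+x)^{m-2i}, $$
with $a_{m,i}, b_{m,i} \in \NN$ (the first due to Foata--Sch\"utzenberger, the second established by the references cited in the introduction), and substitute them into each summand of \eqref{eq:main+} and \eqref{eq:main-}. For $k = 2m$, each monomial so produced has the form $x^{m + E} (1+x)^{n - 2m - 2E}$, where $E = i_0 + i_1 + \cdots + i_{2m}$; this is the basis element $x^j (1+x)^{n-2j}$ with $j = m + E$, proving \eqref{eq:main++}. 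The analogous computation for $k = 2m+1$ yields terms of the form $x^j (1+x)^{n+1-2j}$ with $j = m + 1 + E$, proving \eqref{eq:main--}.

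\textbf{Step 3 (unimodality).} Each basis element $x^j(1+x)^{n-2j}$ is symmetric and unimodal about $n/2$, so a nonnegative combination of such elements inherits both properties; this gives the stated symmetry and unimodality of $f^+_n$, and the parallel statement for $f^-_n$. Finally, writing $f^+_n = \sum a_j x^j$ with $a_j = a_{n-j}$ and $f^-_n = \sum b_j x^j$ with $b_j = b_{n+1-j}$, and using that the two centres of symmetry differ by exactly $1/2$, an elementary coefficient-by-coefficient comparison (applying the monotonicity of each sequence on either side of its respective peak) shows that the coefficients $a_j + b_j$ of $d_n^B$ rise up to $\lfloor (n+1)/2 \rfloor$ and fall thereafter.

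\textbf{The only real obstacle} is the bookkeeping in Step 2, where one must verify that the exponents arising from the substitution align correctly so that every term lands in the appropriate graded basis $\{x^j(1+x)^{n-2j}\}$ or $\{x^j(1+x)^{n+1-2j}\}$; the remaining structural steps are routine.
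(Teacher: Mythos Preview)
Your proposal is correct and follows essentially the same route as the paper: split \eqref{eq:main} by the parity of $k$, use the palindromicity of $A_m(x)$ and $d_m(x)$ to verify the symmetry conditions \eqref{eq:symf+}--\eqref{eq:symf-}, invoke the uniqueness of the decomposition \eqref{eq:dnBsymsum}, and then substitute the known $\gamma$-expansions of $A_m(x)$ and $d_m(x)$ termwise to obtain \eqref{eq:main++}--\eqref{eq:main--}. Your Step~3 spells out the final unimodality claim for $d^B_n(x)$ a bit more explicitly than the paper does, but the argument is the same in substance.
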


Much of the motivation behind this paper comes from the theory of subdivisions
and local $h$-vectors, developed by Stanley \cite{Sta92}, and its extension
\cite{Ath12}. We recall that the local $h$-vector is a fundamental enumerative
invariant of a simplicial subdivision (triangulation) of the simplex. An example
by Stanley (see \cite[Proposition~2.4]{Sta92}) shows that $d_n(x)$ is equal to
the local $h$-polynomial of the (first) simplicial barycentric subdivision of the
$(n-1)$-dimensional simplex. This fact gives a geometric interpretation to $d_n
(x)$ and another proof of its symmetry and unimodality.

  \begin{figure}
  \epsfysize = 3.0 in \centerline{\epsffile{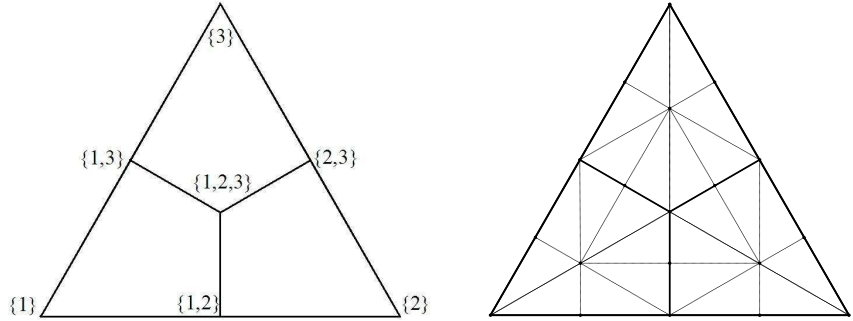}}
  \caption{The cubical barycentric subdivision of the 2-simplex and its
  barycentric subdivision $K_3$}
  \label{fig:K3}
  \end{figure}

Our second main result provides a type $B$ analogue to this interpretation. To
state it, we introduce the following notation. We denote by $K_n$ the simplicial
barycentric subdivision of the cubical barycentric subdivision of the
$(n-1)$-dimensional simplex (Figure \ref{fig:K3} shows this subdivision for
$n=3$). We also introduce the `half Eulerian polynomials'
  \begin{equation} \label{eq:Bn+def}
    B^+_n (x) \ = \ \sum_{w \in B^+_n} \ x^{\des_B(w)}
  \end{equation}
and
  \begin{equation} \label{eq:Bn-def}
    B^-_n (x) \ = \ \sum_{w \in B^-_n} \ x^{\des_B(w)}
  \end{equation}
for the group $B_n$, where $B^+_n$ and $B^-_n$ are the sets of signed permutations
of length $n$ with positive and negative, respectively, last entry, and set $B^+_0
(x) = 1$ and $B^-_0 (x) = 0$ (the set $B^+_n$ has appeared in the context of major
indices for classical Weyl groups; see \cite[page~613]{BC04}).

\begin{theorem} \label{thm:localint}
  The polynomial $f^+_n (x)$ is equal to the local $h$-polynomial of the simplicial
  subdivision $K_n$ (in particular, $f^+_n (x)$ has nonnegative, symmetric and
  unimodal coefficients). Moreover, we have
    \begin{equation} \label{eq:localint+}
      f^+_n (x) \ = \ \sum_{k=0}^n \, (-1)^{n-k} {n \choose k} \, B^+_k (x)
    \end{equation}
  and
    \begin{equation} \label{eq:localint-}
      f^-_n (x) \ = \ \sum_{k=0}^n \, (-1)^{n-k} {n \choose k} \, B^-_k (x)
    \end{equation}
  for $n \in \NN$.
\end{theorem}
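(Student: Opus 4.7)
The plan is to first establish the combinatorial identities (\ref{eq:localint+}) and (\ref{eq:localint-}) algebraically from Theorem \ref{thm:main}, and then to identify $f^+_n(x)$ as the local $h$-polynomial of $K_n$ via Stanley's theory of local $h$-vectors. For the first part, observe that by binomial inversion, (\ref{eq:localint+}) and (\ref{eq:localint-}) are equivalent to the dual identities $B^+_n(x) = \sum_k \binom{n}{k} f^+_k(x)$ and $B^-_n(x) = \sum_k \binom{n}{k} f^-_k(x)$. Substituting (\ref{eq:main+}) and (\ref{eq:main-}) on the right-hand sides and absorbing the fixed-point index $s = n-k$ into the derangement factor via the classical identity $A_m(x) = \sum_{j=0}^m \binom{m}{j} d_j(x)$ (valid for $m \geq 1$, the $m = 0$ case contributing an isolated empty-Eulerian term), both reduce to expansions
\begin{equation*}
B^+_n(x) \ = \ \sum_{l \geq 0} x^{\lfloor l/2 \rfloor} \sum_{(r_1,\ldots,r_l)} \binom{n}{r_1,\ldots,r_l} A_{r_1}(x) \cdots A_{r_l}(x),
\end{equation*}
together with an analogous expansion for $B^-_n(x)$ in which $\lfloor l/2 \rfloor$ is replaced by $\lceil l/2 \rceil$ and the summation starts at $l = 1$; the inner sums range over compositions of $n$ into positive parts. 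I plan to prove these expansions combinatorially by decomposing each $w \in B_n$ according to the maximal ascending runs of $(0, w_1, \ldots, w_n)$: the composition $(r_1, \ldots, r_l)$ records the run lengths, $A_{r_i}(x)$ accounts for the internal descent statistic of each run, the power of $x$ tracks descents between runs, and the parity of $l$ together with the sign pattern of the runs determines whether $w_n$ is positive (so $w \in B^+_n$) or negative (so $w \in B^-_n$).

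For the local $h$-polynomial assertion, Stanley's local-global identity $h(K_n, x) = \sum_{F \subseteq [n]} \ell_F(K_n|_F, x)$, combined with the observation that $K_n$ restricts to $K_k$ on every $k$-subface of the ambient simplex, gives via M\"obius inversion
\begin{equation*}
\ell_{[n]}(K_n, x) \ = \ \sum_{k=0}^n (-1)^{n-k} \binom{n}{k} h(K_k, x).
\end{equation*}
In light of (\ref{eq:localint+}), the equality $\ell_{[n]}(K_n, x) = f^+_n(x)$ reduces to the identity $h(K_n, x) = B^+_n(x)$. I plan to establish this by describing the facets of $K_n$ as maximal flags in the face poset of the cubical barycentric subdivision: each such flag selects one of the $n$ vertices $v$ of the $(n-1)$-simplex (fixing one of the $n$ cubes in the cubical subdivision) together with a maximal flag in the face lattice of that cube, the latter in bijection with $B_{n-1}$; this produces a total of $n \cdot |B_{n-1}| = |B^+_n|$ facets. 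Constructing an explicit bijection and a compatible shelling of $K_n$ would then identify the shelling $h$-vector with the distribution of $\des_B$ on $B^+_n$.

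The principal obstacle is constructing the shelling of $K_n$ (or an equivalent $h$-vector computation) needed to identify $h(K_n, x)$ with $B^+_n(x)$; the algebraic passage from Theorem \ref{thm:main} to (\ref{eq:localint+}) and (\ref{eq:localint-}) is routine once the block-decomposition formulas for $B^\pm_n(x)$ are in hand, modulo careful bookkeeping of the $m = 0$ boundary in the Brenti identity.
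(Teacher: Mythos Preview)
Your overall strategy is sound and genuinely different from the paper's three proofs, but the combinatorial step you sketch for the composition identities contains a concrete error, and the $h$-vector step can be done more cheaply than by shelling.

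\medskip
\textbf{Comparison with the paper.} The paper's second proof simply computes the exponential generating functions of $B^\pm_n(x)$ (via the recurrence obtained by inserting $\pm n$ into a word of length $n-1$) and matches them against those of $f^\pm_n(x)$ from Proposition~\ref{prop:f+-exp}; no composition identity is needed. The first proof instead views $K_n$ as a subdivision of $\sd(2^{[n]})$ and applies the relative local $h$-vector formula of Proposition~\ref{prop:localrelformula}, which outputs \eqref{eq:main+} directly for the local $h$-polynomial. Your route---binomial inversion, absorption of the derangement factor via $A_m(x)=\sum_j\binom{m}{j}d_j(x)$, and the resulting composition expansions for $B^\pm_n(x)$---is a legitimately new path, and the composition identities you write down are correct (and do not appear in the paper).

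\medskip
\textbf{The error.} The decomposition that proves your composition identities is \emph{not} into maximal ascending runs of $(0,w_1,\dots,w_n)$. Ascending runs carry no internal descents, so each $A_{r_i}(x)$ would contribute only its constant term, and the number of runs minus one is $\des_B(w)$ itself, not $\lfloor l/2\rfloor$. What works is the decomposition of $w_1\cdots w_n$ into \emph{maximal blocks of constant sign}. If $w\in B^+_n$ has $l$ such blocks of sizes $r_1,\dots,r_l$ (signs alternate and the last block is positive), then the $B$-descents located at position~$0$ and at block boundaries contribute exactly $\lfloor l/2\rfloor$ regardless of whether $w_1>0$ or $w_1<0$; inside a positive block the $B$-descents are the descents of the absolute values, inside a negative block they are the ascents, and in either case the generating contribution is $A_{r_i}(x)$; the multinomial records the choice of the sets of absolute values. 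The same analysis for $w\in B^-_n$ gives $\lceil l/2\rceil$. With this correction your derivation of \eqref{eq:localint+} and \eqref{eq:localint-} goes through cleanly.

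\medskip
\textbf{The $h$-vector of $K_n$.} You correctly reduce the local $h$-polynomial statement to $h(K_n,x)=B^+_n(x)$, but a shelling is more than you need. The paper (Proposition~\ref{prop:Knhpoly}) observes that the cubical barycentric subdivision of $2^{[n]}$ is isomorphic, as a poset, to the set of subsets $S\subseteq\Omega_n$ containing at least one positive element and at most one of each pair $\{i,-i\}$, ordered by reverse inclusion. A chain in this poset with prescribed cardinalities corresponds bijectively to a $w\in B^+_n$ with prescribed $B$-descent set, so the flag $h$-vector of the order complex refines $B^+_n(x)$ by $\Des_B$, and summing gives $h(K_n,x)=B^+_n(x)$. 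This is considerably shorter than constructing an explicit shelling and tracking restriction faces.
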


We should point out that it is Theorem \ref{thm:localint} and the methods of
\cite{Ath12, Sta92} which led the authors to suspect that formula (\ref{eq:main})
holds. Indeed, it follows from the relevant definitions and some more work (see
Section \ref{sec:geom}) that the local $h$-polynomial of $K_n$ is equal to the
right-hand side of (\ref{eq:localint+}). By exploiting the symmetry of this
polynomial and certain recurrence relations for that and for $d^B_n (x)$ (see
Section~\ref{sec:half}), one can show that the local $h$-polynomial of $K_n$ is
equal to $f^+_n (x)$, as defined by the decomposition (\ref{eq:dnBsymsum}). A
formula for the change in the local $h$-vector of a simplicial subdivision of
the simplex after further subdivision \cite[Proposition 3.6]{Ath12} (see also
Proposition~\ref{prop:localrelformula}) can then be used to produce equation
(\ref{eq:main+}). This suggested that (\ref{eq:main-}), and hence
(\ref{eq:main}), hold as well.

The structure and other results of this paper are as follows.
Section~\ref{sec:perms} provides the necessary background on (signed)
permutations, simplicial complexes and subdivisions. Section~\ref{sec:proof}
proves Theorem~\ref{thm:main} and Corollary~\ref{cor:main}. A bijective proof
of Theorem~\ref{thm:main}, as well as one using generating functions, is
given and the exponential generating functions of $f^+_n (x)$ and $f^-_n
(x)$ are computed. Section \ref{sec:comb} gives a combinatorial interpretation
to the coefficients of these polynomials. Section \ref{sec:relative} proves
the main properties of the relative local $h$-vector, a generalization of
the concept of local $h$-vector which was introduced in \cite[Section
3]{Ath12} (and, in a variant form, in \cite{Ni12}) and derives a monotonicity
property for local $h$-vectors. These results were stated without proof in
\cite{Ath12}. As an example (used in one of the proofs of
Theorem~\ref{thm:localint}), the relative local $h$-vector of the barycentric
subdivision of the simplex is computed. Section \ref{sec:geom} gives two proofs
of Theorem~\ref{thm:localint}. A first step towards these proofs is to
interpret $B^+_n (x)$ as the $h$-polynomial of the simplicial complex $K_n$
(Proposition~\ref{prop:Knhpoly}). Given that, one proof uses the theory of
(relative) local $h$-vectors, as discussed earlier, while the other uses
recurrences and generating functions.

Section~\ref{sec:half} studies the polynomials $B^+_n (x)$ and $B^-_n (x)$.
A simple relation between the two is shown to hold (Lemma~\ref{lem:rec}).
Using its interpretation as the $h$-polynomial of $K_n$ and the theory of
local $h$-vectors, a simple formula for $B^+_n (x)$ (hence one for $B^-_n
(x)$ and one for the Eulerian polynomial $B_n (x)$) in terms of the
Eulerian polynomial $A_n (x)$ is proven (Proposition~\ref{prop:Bn+formula}).
Using this formula, it is shown that $B^+_n (x)$ and $B^-_n (x)$ are
real-rooted, hence unimodal and log-concave, and a new proof of the
unimodality of $B_n (x)$ is deduced. Recurrences and generating functions
for $B^+_n (x)$ and $B^-_n (x)$, as well as for $f^+_n (x)$ and $f^-_n (x)$,
are also given and a third proof of Theorem~\ref{thm:localint} is deduced.

\section{Permutations and subdivisions}
\label{sec:perms}

This section fixes notation and includes background material on (signed)
permutations, simplicial complexes and their subdivisions. For more
information on these topics, the reader is referred to \cite{Bj95, BB05,
Sta92, StaCCA, StaEC1}.

Throughout this paper, $\mathbb{N}$ denotes the set of nonnegative integers.
For each positive integer $n$ we set $[n]:= \{1, 2,\dots,n\}$ and $\Omega_n =
\{1, -1, 2, -2,\dots,n, -n\}$. We denote by $|S|$ the cardinality, and by
$2^S$ the set of all subsets, of a finite set $S$.

\subsection{Permutations}
\label{subsec:perm}

A \emph{permutation} of a finite set $S$ is a bijective map $w: S \to S$.
We denote by $\mathfrak{S} (S)$ the set of all permutations of $S$ and set
$\mathfrak{S}_n := \mathfrak{S} ([n])$.
Suppose that $S = \{a_1, a_2,\dots,a_n\}$ has $n$ elements, which are totally
ordered by $a_1 \prec a_2 \prec \cdots \prec a_n$. A permutation $w \in
\mathfrak{S}(S)$ can be represented as the sequence $(w(a_1),
w(a_2),\dots,w(a_n))$, or as the word $w(a_1) w(a_2) \cdots w(a_n)$, or as a
disjoint union of cycles \cite[Section~1.3]{StaEC1}. The \emph{standard cycle
form} is defined by requiring that (a) each cycle is written with its largest
element (with respect to the total order $\preceq$) first and (b) the cycles
are written in increasing order of their largest element \cite[page~23]{StaEC1}.

Given $w \in \mathfrak{S} (S)$, an element $a \in S$ is called an \emph{excedance}
of $w$ (with respect to $\preceq$) if $w(a) \succ a$ and an \emph{inverse
excedance} if $w(a) \prec a$. The element $a_i \in S$ is called a \emph{descent}
(respectively, \emph{ascent}) of $w$ if $i \in [n-1]$ and $w(a_i) \succ w(a_{i+1})$
(respectively, $w(a_i) \prec w(a_{i+1})$). The number of excedances (respectively,
inverse excedances, descents or ascents) of $w$ will be denoted by $\exc (w)$
(respectively, by $\iexc(w)$, $\des (w)$ or $\asc (w)$). The $n$th Eulerian
polynomial \cite[Section~1.4]{StaEC1} is defined by the formulas
  \begin{equation} \label{eq:eulerdef}
    A_n (x) \ = \ \sum_{w \in \mathfrak{S} (S)} x^{\exc(w)} \ = \
                  \sum_{w \in \mathfrak{S} (S)} x^{\iexc(w)} \ = \
                  \sum_{w \in \mathfrak{S} (S)} x^{\des(w)} \ = \
                  \sum_{w \in \mathfrak{S} (S)} x^{\asc(w)}.
  \end{equation}
Clearly, these sums depend only on $n$ and not on $S$ or the choice of total
order $\preceq$.

The previous definitions apply in particular to $\mathfrak{S}_n$ (with the
standard choice of $\preceq$ obtained by setting $a_i = i$ for $1 \le i \le
n$). We will denote by $\dD_n$ the set of all derangements (permutations
without fixed points) in $\mathfrak{S}_n$.

\subsection{Signed permutations}
\label{subsec:signed}

For the purposes of this paper, it will be convenient to define a \emph{signed
permutation} of $[n]$ as a choice of a subset $S = \{a_1, a_2,\dots,a_n\}$ of
$\Omega_n$ such that $a_i \in \{i, -i\}$ for $1 \le i \le n$ and permutation $w
\in \mathfrak{S} (S)$. We will represent such a permutation $w$ as the sequence
$(w(a_1), w(a_2),\dots,w(a_n))$, or as the word $w(a_1) w(a_2) \cdots w(a_n)$,
or as a disjoint union of cycles. We will find it convenient to define the
standard cycle form of $w$ using the total order on $S$ which is the reverse of
the one inherited from the natural total order on $\ZZ$. Thus, cycles of $w$ will
be written with their \emph{smallest} element first and in \emph{decreasing} order
of their smallest element. We will say that $w$ is a \emph{derangement} if there
is no $a \in S \cap [n]$ such that $w(a) = a$. We will denote the set of all
signed permutations of $[n]$ by $B_n$ and the set of all derangements in $B_n$
by $\dD^B_n$.

Given $w \in B_n$ as before, we say that $i \in \{0, 1,\dots,n-1\}$ is a
\emph{$B$-descent} (respectively, $B$-\emph{ascent}) of $w$ if $w(a_i) >
w(a_{i+1})$ (respectively, $w(a_i) < w(a_{i+1})$), where $w(a_0) = 0$ by
convention. The $n$th Eulerian polynomial of type $B$ \cite[Section~3]{Bre94}
can be defined by
  \begin{equation} \label{eq:eulerBdef}
    B_n (x) \ = \ \sum_{w \in B_n} x^{\des_B(w)}
            \ = \ \sum_{w \in B_n} x^{\asc_B(w)},
  \end{equation}
where $\des_B (w)$ stands for the number of $B$-descents and $\asc_B (w)$ for
the number of $B$-ascents of $w \in B_n$. Following Brenti~\cite[p.~431]{Bre94},
we say that $a \in S$ is a \emph{$B$-excedance} of $w$ if $w(a) > a$, or if $-a
\in [n]$ and $w(a) = a$. We say that $a \in S$ is an \emph{inverse $B$-excedance}
of $w$ if $w(a) < a$, or if $-a \in [n]$ and $w(a) = a$. The number of
$B$-excedances of $w$ will be denoted by $\exc_B(w)$ and that of inverse
$B$-excedances by $\iexc_B(w)$. We then have $\iexc_B(w) = \exc_B(w^{-1})$ and
(see Theorem~3.15 and Corollary~3.16 in \cite{Bre94})
  \begin{equation} \label{eq:eulerBdef2}
    B_n (x) \ = \ \sum_{w \in B_n} x^{\exc_B(w)}.
  \end{equation}

The $n$th derangement polynomial of type $B$ is defined by (\ref{eq:dnBdef}).
Since $\exc_B(w) = \iexc_B(w^{-1})$ and the map which sends a permutation $w \in
\mathfrak{S} (S)$ to its inverse $w^{-1}$ induces an involution on $B_n$ which
preserves fixed points, we have
  \begin{equation} \label{eq:dnBidef}
    d^B_n (x) \ = \ \sum_{w \in \dD^B_n} \ x^{\iexc_B(w)}.
  \end{equation}
For the similar reasons, (\ref{eq:dndef}) continues to hold if $\exc$ is
replaced by $\iexc$ and (\ref{eq:eulerBdef2}) continues to hold if $\exc_B$ is
replaced by $\iexc_B$.

\subsection{Polynomials}
\label{subsec:polys}

Let $p(x) = \sum_{k \ge 0} a_k x^k = \sum_{k=0}^d a_k x^k$ be a polynomial with real
coefficients. We recall that $p(x)$ is unimodal (and has unimodal coefficients)
if there exists an index $0 \le j \le d$ such that $a_i \le a_{i+1}$ for $0 \le
i \le j-1$ and $a_i \ge a_{i+1}$ for $j \le i \le d-1$. Such an index is called
a \emph{peak}. The polynomial $p(x)$ is said to be log-concave if $a_i^2 \ge
a_{i-1}a_{i+1}$ for $1 \le i \le d-1$ and to have internal zeros if there exist
indices $0 \le i < j < k \le d$ such that $a_i, a_k \ne 0$ and $a_j = 0$. We will
say that $p(x)$ is symmetric (and that it has symmetric coefficients) if there
exists an integer $n \ge d$ such that $a_i = a_{n-i}$ for $0 \le i \le n$. The
\emph{center of symmetry} of $p(x)$ is then defined to be $n/2$ (this is
well-defined provided $p(x)$ is nonzero).

We will say that $p(x)$ is \emph{real-rooted} if all its complex roots are real.
It is well-known (see, for instance, \cite{Sta89}) that if $p(x)$ is a
real-rooted polynomial with nonnegative coefficients, then $p(x)$ is log-concave
and unimodal, with no internal zeros. The following theorem, first proved by
Edrei~\cite{Ed53}, gives a necessary and sufficient condition for a polynomial
with nonnegative real coefficients to be real-rooted.
  \begin{theorem} {\rm (\cite{Ed53})} \label{thm:realroots}
    Let $p(x) = \sum_{k \ge 0} a_k x^k \in \RR[x]$ be a polynomial with $a_k \ge
    0$ for every $k \in \NN$ and set $a_k = 0$ for all negative integers $k$.
    Then $p(x)$ is real-rooted if and only if every minor of the lower triangular
    matrix $(a_{i-j})_{i,j=0}^\infty$ is nonnegative.
  \end{theorem}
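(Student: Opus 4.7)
The plan is to exploit the fact that the assignment $p(x) \mapsto T_p$, where $T_p = (a_{i-j})_{i,j \geq 0}$ is the infinite lower triangular Toeplitz matrix attached to $p(x)=\sum_k a_k x^k$, is an algebra homomorphism from polynomials (under multiplication) to infinite lower triangular matrices (under matrix multiplication); in particular, $T_{pq}=T_p T_q$ for all polynomials $p,q$. This reduces the problem to understanding the totally nonnegative factorizations of $T_p$.

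For the forward direction, suppose $p(x)$ has nonnegative coefficients and is real-rooted. Every nonzero root of $p(x)$ is then a nonpositive real number, so $p(x) = c \prod_{i=1}^d (1+\beta_i x)$ with $c,\beta_i \geq 0$. The matrix $T_{1+\beta x}$ is bidiagonal, with $1$'s on the main diagonal and $\beta$'s on the subdiagonal; a direct inspection (any nonzero minor forces each selected row index to equal the corresponding column index or exceed it by one) shows that every minor of this matrix is a nonnegative power of $\beta$, and hence nonnegative. Writing $T_p = c \cdot T_{1+\beta_1 x} \cdots T_{1+\beta_d x}$ and applying the Cauchy--Binet formula expresses every minor of $T_p$ as a sum of products of minors of the totally nonnegative factors, and so every minor of $T_p$ is nonnegative.

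For the converse, I would argue by induction on $d=\deg p(x)$, with $d \leq 1$ trivial. The heart of the argument is to show that the hypothesis of total nonnegativity of $T_p$ forces $p(x)$ to have at least one real (hence nonpositive) root $-\beta$; once this is done, one writes $p(x) = (1+\beta x)\, q(x)$, and the algebra isomorphism gives $T_q = T_{1+\beta x}^{-1} T_p$, whose minors one can check remain nonnegative using the explicit LU structure of $T_{1+\beta x}$, so the inductive hypothesis applies to $q(x)$. Extracting a real root from the minor conditions is the main obstacle: the $2\times 2$ minors yield only log-concavity, which is strictly weaker than real-rootedness (already in degree two), so one must use larger Hankel-type minors. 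The cleanest route I know is via a limiting form of the Aissen--Schoenberg--Whitney theorem: assume $p(x)$ has a complex conjugate pair of roots with nonreal ratio, and construct, from the corresponding irreducible quadratic factor with negative discriminant, an explicit minor of $T_p$ that is negative. Making this last construction explicit, rather than appealing to the analytic apparatus (Nevanlinna factorization of $1/p(-x)$) used in Edrei's original proof, is the technically hardest point of the argument.
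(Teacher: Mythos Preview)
The paper does not supply a proof of this theorem; it is quoted from Edrei and later used as a black box (in the proof of part~(c) of Lemma~\ref{lem:Er}). So there is no in-paper argument to compare against, and I assess your attempt on its own.

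Your forward direction is correct and standard: the multiplicativity $T_{pq}=T_pT_q$ together with Cauchy--Binet reduces the claim to the bidiagonal case $1+\beta x$, which is totally nonnegative by inspection.

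The converse, however, is only a sketch with its two hardest steps left open. First, the inductive step is not justified: the inverse $T_{1+\beta x}^{-1}$ has $(i,j)$-entry $(-\beta)^{i-j}$, whose minors do not carry a fixed sign, so Cauchy--Binet gives no information about the minors of $T_q=T_{1+\beta x}^{-1}T_p$. One \emph{can} show that $T_q$ inherits total nonnegativity from $T_p$ for a suitably chosen $\beta$, but this is itself a nontrivial fact and you do not supply it. Second---and more to the point---you yourself concede that the decisive step, producing a negative minor of $T_p$ from a conjugate pair of non-real roots, is ``the technically hardest point of the argument'' and is not carried out. That step is the entire content of the hard direction (once it is done for an arbitrary $p$, the induction becomes superfluous), and it genuinely requires minors of unbounded size: already for $p(x)=1+bx+cx^2$ with $0<b^2<4c$, writing $\cos\theta=b/(2\sqrt c)$, the $k\times k$ minor on rows $\{1,\dots,k\}$ and columns $\{0,\dots,k-1\}$ equals $c^{k/2}\sin((k{+}1)\theta)/\sin\theta$, and these stay nonnegative for all $k\le K$ precisely when $b^2\ge 4c\cos^2\!\frac{\pi}{K+1}$. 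Only in the limit $K\to\infty$ does one recover the discriminant inequality $b^2\ge 4c$, so no uniform finite-minor construction is available. You have correctly located the obstacle but not overcome it.
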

A (nonzero) symmetric polynomial $p(x) \in \RR[x]$ can be written (uniquely) in
the form
  \begin{equation} \label{eq:defgamma}
      p(x) \ = \ (1+x)^n \ \gamma \left( \frac{x}{(1+x)^2} \right) \, = \,
      \sum_{i=0}^{\lfloor n/2 \rfloor} \, \gamma_i x^i (1+x)^{n-2i}
  \end{equation}
for some polynomial $\gamma(x) = \sum_{i \ge 0} \gamma_i x^i$. We say that $p(x)$
is \emph{$\gamma$-nonnegative} if $\gamma_i \ge 0$ for every $i$. Clearly, every
$\gamma$-nonnegative polynomial is unimodal. For classes of $\gamma$-nonnegative
polynomials which appear in combinatorics we refer the reader, for instance, to
\cite{DPK09} and references therein.

\subsection{Simplicial complexes}
\label{subsec:simpcomp}

An (\emph{abstract}) \emph{simplicial complex} $\Delta$ on the ground set $V$
is a collection $\Delta$ of subsets of $V$ such that $F \subseteq G \in \Delta$
implies $F \in \Delta$ (all simplicial complexes considered in this paper will
be assumed to be finite). The elements of $\Delta$ are called \emph{faces}. The
\emph{dimension} of a face is equal to one less than its cardinality. The
\emph{dimension} of $\Delta$ is the maximum dimension of its faces. Faces of
dimension 0 and 1 are called \emph{vertices} and \emph{edges}, respectively. A
\emph{facet} of $\Delta$ is a face which is maximal with respect to inclusion.
The complex $\Delta$ is said to be \emph{pure} if all its facets have the same
dimension. The \emph{face poset} $\mathcal{F}(\Delta)$ of a simplicial complex
$\Delta$ is the set of nonempty faces of $\Delta$, partially ordered by inclusion.

The \emph{open star} ${\rm st}_{\Delta}(F)$ of a face $F \in \Delta$ is the
collection of all faces of $\Delta$ containing $F$. The \emph{link} of a face
$F$ in $\Delta$ is the subcomplex of $\Delta$ defined as $\link_{\Delta}(F) = \{
G \sm F: G \in \Delta, F \subseteq G\}$. Suppose that $\Delta_1$ and $\Delta_2$
are simplicial complexes on disjoint ground sets. The \emph{simplicial join}
of $\Delta_1$ and $\Delta_2$ is the simplicial complex $\Delta_1 * \Delta_2$
whose faces are the sets of the form $F_1 \cup F_2$, where $F_1 \in \Delta_1$
and $F_2 \in \Delta_2$. The \emph{order complex} \cite[Section~9.3]{Bj95}
\cite[Section~3.8]{StaEC1} of a (finite) partially ordered set $Q$ is defined
as the simplicial complex of chains (totally ordered subsets) of $Q$.

All topological properties or invariants of $\Delta$ mentioned in the sequel
will refer to those of its geometric realization $\|\Delta\|$
\cite[Section~9.1]{Bj95}. For example, $\Delta$ is a \emph{simplicial ball} if
$\|\Delta\|$ is homeomorphic to a ball. For a simplicial $d$-dimensional ball
$\Delta$, we denote by $\partial \Delta$ the subcomplex consisting of all
subsets of the $(d-1)$-dimensional faces which are contained in a unique facet
of $\Delta$. We call $\partial \Delta$ the \emph{boundary} and $\inte(\Delta)
:= \Delta \sm \partial \Delta$ the \emph{interior} of $\Delta$.

\subsection{Subdivisions}
\label{subsec:sub}

Let $\Delta$ be a simplicial complex. A (\emph{topological}) \emph{simplicial
subdivision} of $\Delta$ \cite[Section~2]{Sta92} is a simplicial complex $\Delta'$
together with a map $\sigma: \Delta' \to \Delta$ such that the following hold for
every $F \in \Delta$: (a) the set $\Delta'_F := \sigma^{-1}
(2^F)$ is a subcomplex of $\Delta'$ which is a simplicial ball of dimension
$\dim(F)$; and (b) the interior of $\Delta'_F$ is equal to $\sigma^{-1}(F)$. The
subcomplex $\Delta'_F$ is called the \emph{restriction} of $\Delta'$ to $F$. The
face $\sigma(G)\in \Delta$ is called the \emph{carrier} of $G \in \Delta'$. The
subdivision $\Delta'$ is called \emph{quasi-geometric} \cite[Definition 4.1
(a)]{Sta92} if no face of $\Delta'$ has the carriers of its vertices contained in
a face of $\Delta$ of smaller dimension. Moreover, $\Delta'$ is called
\emph{geometric} \cite[Definition 4.1 (b)]{Sta92} if there exists a geometric
realization of $\Delta'$ which geometrically subdivides a geometric realization
of $\Delta$, in the way prescribed by $\sigma$. Clearly, all geometric
subdivisions (such as the barycentric subdivisions considered in this paper) are
quasi-geometric.

We now describe two common ways to subdivide a simplicial complex $\Delta$. The
order complex of the face poset $\fF(\Delta)$, denoted by $\sd (\Delta)$, consists
of the chains of nonempty faces of $\Delta$. This complex is naturally a (geometric)
simplicial subdivision of $\Delta$, called the \emph{barycentric subdivision}, where
the carrier of a chain $\cC$ of nonempty faces of $\Delta$ is defined as the maximum
element of $\cC$.

Given a face $F \in \Delta$ we set $\Delta' = (\Delta \sm \textrm{st}_{\Delta}(F))
\cup (\{v\} * \partial(2^F) * \textrm{lk}_{\Delta}(F))$, where $v$ is a new vertex
added and $\partial(2^F) = 2^F \sm F$. Then $\Delta'$ is a simplicial complex which
is a simplicial subdivision of $\Delta$, called the \emph{stellar subdivision} of
$\Delta$ on $F$.

\subsection{Face enumeration}
\label{subsec:enumface}

Let $\Delta$ be a $(d-1)$-dimensional simplicial complex. We denote by $f_i
(\Delta)$ the number of $i$-dimensional faces of $\Delta$. A fundamental
enumerative invariant of $\Delta$ is the \emph{$f$-polynomial}, defined by
  $$ f_{\Delta}(x) \ = \ \sum_{i=0}^{d-1} f_i (\Delta) x^i. $$
The \emph{$h$-polynomial} of $\Delta$ is defined by
  $$ h_{\Delta}(x) \ = \ \sum_{i=0}^d h_i(\Delta) \, x^i \ = \ (1-x)^d
     f_{\Delta} \left( \frac{x}{1-x} \right). $$
For the importance of $h$-polynomials, the reader is referred to
\cite[Chapter~II]{StaCCA}. For the simplicial join $\Delta_1 * \Delta_2$ of two
simplicial complexes we have $h(\Delta_1 * \Delta_2, x) = h(\Delta_1, x) h(\Delta_2,
x)$.

Let $\Gamma$ be a simplicial subdivision of a $(d-1)$-dimensional simplex $2^V$.
The polynomial $\ell_V (\Gamma, x) = \ell_0 + \ell_1 x + \cdots + \ell_d x^d$
defined by
  \begin{equation} \label{eq:deflocalh}
    \ell_V (\Gamma, x) \ = \sum_{F \subseteq V} \ (-1)^{d - |F|} \,
    h (\Gamma_F, x)
  \end{equation}
is the \emph{local $h$-polynomial} of $\Gamma$ (with respect to $V$)
\cite[Definition~2.1]{Sta92}. The sequence $\ell_V (\Gamma) = (\ell_0,
\ell_1,\dots,\ell_d)$ is the \emph{local $h$-vector} of $\Gamma$ (with respect to
$V$).

The following theorem summarizes some of the main properties of local $h$-vectors
(see Theorems 3.2 and 3.3 and Corollary~4.7 in \cite{Sta92}). For the definition of
regular subdivision we refer the reader to \cite[Definition 5.1]{Sta92}.
  \begin{theorem} {\rm (Stanley~\cite{Sta92})} \label{thm:stalocal}
    \begin{itemize}
      \item[(a)]
        For every simplicial subdivision $\Delta'$ of a pure simplicial complex
        $\Delta$ we have
          \begin{equation} \label{eq:hformula}
            h (\Delta', x) \ = \ \sum_{F \in \Delta} \,
            \ell_F (\Delta'_F, x) \, h (\link_\Delta (F), x).
          \end{equation}
      \item[(b)]
        The local $h$-polynomial $\ell_V (\Gamma, x)$ is symmetric for every simplicial
        subdivision $\Gamma$ of the simplex $2^V$, i.e. we have $\ell_i = \ell_{d-i}$
        for $0 \le i \le d$.
      \item[(c)]
        The local $h$-polynomial $\ell_V (\Gamma, x)$ has nonnegative coefficients for
        every quasi-geometric simplicial subdivision $\Gamma$ of the simplex $2^V$.
      \item[(d)]
        The local $h$-polynomial $\ell_V (\Gamma, x)$ has unimodal coefficients for
        every regular simplicial subdivision $\Gamma$ of the simplex $2^V$.
    \end{itemize}
  \end{theorem}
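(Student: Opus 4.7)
My plan is to handle the four parts in sequence, treating (a) as the structural identity that feeds into (b)--(d).

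For part (a), the starting point is Möbius inversion: applied to the defining formula (\ref{eq:deflocalh}) on the Boolean lattice $2^V$, it produces the dual identity
\[
h(\Gamma_F, x) \ = \ \sum_{G \subseteq F} \ell_G(\Gamma_G, x)
\]
for every face $F$ of a subdivided simplex. To establish (\ref{eq:hformula}), I would partition the faces of $\Delta'$ by the carrier map $\sigma : \Delta' \to \Delta$. The key geometric input is that for any face $\tau$ of $\Delta'$ whose carrier is exactly $F$, the link admits the join decomposition $\link_{\Delta'}(\tau) = \link_{\Delta'_F}(\tau) * \link_\Delta(F)$; translating this to $h$-polynomials (which multiply across joins) and summing over carriers yields a factored expression whose first factor is $h(\Delta'_F, x)$. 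Substituting the dual identity converts $h(\Delta'_F, x)$ into $\sum_{G \subseteq F} \ell_G(\Delta'_G, x)$, and collapsing the resulting double sum over pairs $(F,G)$ with $G \subseteq F \in \Delta$ by reversing the order of summation produces precisely $\sum_{G \in \Delta} \ell_G(\Delta'_G, x) \, h(\link_\Delta(G), x)$.

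Part (b) would follow once one observes that each $\Gamma_F$ is a $(|F|-1)$-dimensional simplicial ball, whose $h$-polynomial satisfies a reciprocity of the form $x^{|F|} h(\Gamma_F, 1/x) = h(\Gamma_F, x) + (\text{boundary contribution})$. Plugging this into (\ref{eq:deflocalh}), the boundary contribution coming from $\Gamma_F$ cancels, up to sign, against the corresponding terms from the proper subfaces of $F$ in the inclusion--exclusion, and the remaining expression is manifestly palindromic. For part (d), I would invoke the hard Lefschetz theorem for the projective toric variety associated to the regular fan underlying $\Gamma$: a Lefschetz element in the Stanley--Reisner ring of $\Gamma$, modulo a linear system of parameters adapted to the faces of $2^V$, produces the required unimodal chain on the graded components that encode $\ell_V(\Gamma, x)$.

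The main obstacle is part (c). Because quasi-geometricity is far weaker than either shellability or regularity, neither a shelling nor an algebraic Lefschetz argument is directly available. My strategy would be to construct a combinatorial model for $\ell_V(\Gamma, x)$ supported on a distinguished subset of faces of $\Gamma$---for instance, those faces whose vertex carriers collectively span $V$, which I would call \emph{interior} faces. The quasi-geometric hypothesis forces every interior face to have dimension $d-1$, which is precisely the rigidity needed to extract nonnegativity. Matching this candidate generating polynomial against the alternating sum in (\ref{eq:deflocalh}) via a sign-reversing involution on the non-interior contributions is the central technical step, and identifying the right involution is the crux of the proof.
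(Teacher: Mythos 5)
First, note that the paper does not prove this statement at all: Theorem~\ref{thm:stalocal} is quoted as background from Stanley \cite{Sta92} (Theorems~3.2, 3.3 and Corollary~4.7 there), so your proposal has to be measured against Stanley's arguments. The decisive gap is part (c). Your plan rests on the claim that quasi-geometricity forces every ``interior'' face (one whose vertex carriers jointly span $V$) to have dimension $d-1$; this is false. In the barycentric subdivision of $2^V$ the single vertex corresponding to the chain $\{V\}$ has carrier $V$, hence is ``interior'' in your sense, but has dimension $0$. Quasi-geometricity only forbids an $(e-1)$-dimensional face of $\Gamma$ from having all its vertex carriers inside a face of $2^V$ of dimension less than $e-1$; it imposes no upper rigidity of the kind you need. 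More fundamentally, no sign-reversing involution or combinatorial model is known that proves (c) at this level of generality: Stanley's proof is algebraic. He shows that $\ell_V(\Gamma,x)$ is the Hilbert series of the local face module $L_V(\Gamma)$, a quotient of the Stanley--Reisner ring of $\Gamma$ by a \emph{special} linear system of parameters (one adapted to the carrier map), and quasi-geometricity is exactly what guarantees such a special l.s.o.p.\ exists; nonnegativity then follows from Cohen--Macaulayness. Your paragraph on (c) is an acknowledged research program (``identifying the right involution is the crux''), not a proof, and the program as stated starts from an incorrect premise.

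There is also a genuine flaw in your argument for (a): the asserted join decomposition $\link_{\Delta'}(\tau) = \link_{\Delta'_F}(\tau) * \link_\Delta(F)$ for a face $\tau$ with carrier $F$ is false for general (even geometric) subdivisions. For example, subdivide the triangle $2^{\{1,2,3\}}$ by placing a vertex $v$ on the edge $\{1,2\}$ and a vertex $w$ in the interior, with facets $\{1,v,w\}, \{v,2,w\}, \{2,3,w\}, \{3,1,w\}$; then $\link_{\Delta'}(v)$ is the path $1$--$w$--$2$, whereas the proposed join is the path $1$--$3$--$2$. The link is only a subdivision of the appropriate join, and since $h$-polynomials are not subdivision invariants you cannot ``translate to $h$-polynomials'' from that. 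Moreover, grouping faces of $\Delta'$ by carrier collects the \emph{interior} faces of each restriction $\Delta'_F$, so the factor that appears is an interior $h$-polynomial, not $h(\Delta'_F,x)$; the correct route (Stanley's) combines this grouping with the reciprocity $x^{|F|}h(\Delta'_F,1/x)=h(\inte \Delta'_F,x)$ for balls and the M\"obius-inverted identity you wrote down. Your sketches for (b) (ball reciprocity plus inclusion--exclusion over subfaces, as in the proof of Theorem~\ref{thm:relative}(a) in this paper) and (d) (hard Lefschetz for the projective toric variety attached to a regular subdivision, acting on the local face module) are in the right spirit, though only outlines.
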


\section{Proof of the main formula}
\label{sec:proof}

This section gives two proofs of Theorem~\ref{thm:main}, one bijective and
one using generating functions, and deduces Corollary~\ref{cor:main}. As a
byproduct of the second proof, the exponential generating functions of
$f^+_n (x)$ and $f^-_n (x)$ are computed.

\medskip
\noindent
\begin{proof}[First proof of Theorem~\ref{thm:main}] Let us denote by $\cC_n$
the collection of sequences $(\sigma_0, \sigma_1,\dots,\sigma_k)$ of
permutations, where $k \in \NN$ and $\sigma_i \in \mathfrak{S}(S_i)$ for $0
\le i \le k$, such that $(S_0, S_1,\dots,S_k)$ is a weak ordered partition of
$[n]$ with $S_i$ nonempty for $1 \le i \le k$ and $\sigma_0$ is a derangement
of $S_0$. We will describe a one-to-one correspondence $\varphi: \dD^B_n \to
\cC_n$ such that
  \begin{equation} \label{eq:bicondition}
    \iexc_B (w) \ = \ \iexc(\sigma_0) \, + \, \sum_{i=1}^k \, f(\sigma_i) \,
    + \, \lfloor \frac{k+1}{2} \rfloor
  \end{equation}
for every $w \in \dD^B_n$, where $(\sigma_0, \sigma_1,\dots,\sigma_k) = \varphi
(w)$ and $f(\sigma_i)$ stands for $\des(\sigma_i)$ or $\asc(\sigma_i)$, if $i$ is
even or odd, respectively. Given this, using (\ref{eq:dnBidef}) and recalling
that there are ${n \choose r_0, r_1,\dots,r_k}$ weak ordered partitions $(S_0,
S_1,\dots,S_k)$ of $[n]$ satisfying $|S_i| = r_i$ for $0 \le i \le k$, we get
  \begin{eqnarray*}
    d^B_n (x) &=& \sum \ {n \choose r_0, r_1,\dots,r_k} \, x^{\lfloor
    \frac{k+1}{2} \rfloor} \sum_{\sigma_0 \in \dD_{r_0}} x^{\iexc(\sigma_0)} \,
    \left( \, \prod_{i=1}^k \, \sum_{\sigma_i \in \mathfrak{S}_{r_i}}
    x^{\des(\sigma_i)} \right) \\
    & & \\
    &=& \sum \ {n \choose r_0, r_1,\dots,r_k} \, x^{\lfloor
        \frac{k+1}{2} \rfloor} \, d_{r_0}(x) \, A_{r_1}(x) \cdots A_{r_k}(x)
  \end{eqnarray*}
and the proof follows.

To define $\varphi$, consider a derangement $w \in \dD^B_n$ and let $C_1 C_2
\cdots C_m$ be the standard cycle form of $w$. Then there is an index $j \in \{0,
1,\dots,m\}$ such that all elements of $C_1, C_2,\dots,C_j$ are positive and the
first (smallest) element of $C_{j+1}$ is negative. We define $\sigma_0$ as the
product of $C_1, C_2,\dots,C_j$ and $S_0$ as the set of all elements which appear
in these cycles, so that $\sigma_0 \in \mathfrak{S}(S_0)$ is a derangement. The
remaining cycles $C_{j+1},\dots,C_m$ form a word $u$ whose first element is
negative. This word decomposes uniquely as a product $u = u_1 u_2 \cdots u_k$ of
subwords $u_i$ so that for $1 \le i \le k$, all elements of $u_i$ are negative if
$i$ is odd and positive if $i$ is even. We define $S_i$ as the set of absolute
values of the elements of $u_i$ and $\sigma_i \in \mathfrak{S}(S_i)$ as the
permutation which corresponds to the word $u_i$. For instance, if $n = 9$ and $w
= (3 \ 7) (1 \ 4) (-5 \ \, 9 \ -2) (-8 \ -6)$ in standard cycle form, then
$\sigma_0 = (1 \ 4) (3 \ 7)$ in cycle form, $k = 3$ and $\sigma_1 = (5)$,
$\sigma_2 = (9)$, $\sigma_3 = (2, 8, 6)$, as sequences. We set $\varphi(w) =
(\sigma_0, \sigma_1,\dots,\sigma_k)$ and leave it to the reader to verify that
the map $\varphi: \dD^B_n \to \cC_n$ is a well defined bijection.

To verify (\ref{eq:bicondition}) we let $w \in \dD^B_n$ with $\varphi (w) =
(\sigma_0, \sigma_1,\dots,\sigma_k)$ and $u = a_1 a_2 \cdots a_p$ be the word
defined in the previous paragraph. Then, by the definitions of standard cycle
form and (inverse) $B$-excedance, $a \in \Omega_n$ is an inverse $B$-excedance
of $w$ if and only if $a$ is an inverse excedance of $\sigma_0$, or $a = a_i$
for some index $1 \le i < p$ with $a_i > a_{i+1}$, or $a = a_p$. Thus, equation
(\ref{eq:bicondition}) follows.
\end{proof}

For the second proof of Theorem~\ref{thm:main} we set
  \begin{equation} \label{eq:Anexp}
    \aA(t) \ := \ \sum_{n \ge 1} \ A_n (x) \, \frac{t^n}{n!} \ = \ \frac{e^t -
    e^{xt}} {e^{xt} - xe^t}
  \end{equation}
and (see \cite[Proposition 5]{Bre90})
  \begin{equation} \label{eq:dnexp}
    \dD(t) \ := \ \sum_{n \ge 0} \ d_n (x) \, \frac{t^n}{n!} \ = \ \frac{1-x}
    {e^{xt} - xe^t},
  \end{equation}
where $d_0 (x) = 1$. We also recall (see \cite[Theorem 3.3]{CTZ09}
\cite[Theorem~3.2]{Ch09}) that
  \begin{equation} \label{eq:dnBexp}
    \sum_{n \ge 0} \ d^B_n (x) \, \frac{t^n}{n!} \ = \ \frac{(1-x) e^{xt}}
    {e^{2xt} - xe^{2t}},
  \end{equation}
where $d^B_0 (x) = 1$.

\medskip
\noindent
\begin{proof}[Second proof of Theorem~\ref{thm:main}] We denote by $S_n (x)$
(respectively, by $S^+_n (x)$ and $S^-_n (x)$) the right-hand side of
(\ref{eq:main}) (respectively, of (\ref{eq:main+}) and (\ref{eq:main-})), so
that $S_n (x) = S^+_n (x) + S^-_n (x)$ for $n \in \NN$. We compute that

  \begin{eqnarray*}
    \sum_{n \ge 0} \ S^+_n (x) \, \frac{t^n}{n!} &=& \sum_{k, \, r_i \ge 0} \
    x^k \, d_{r_0} (x) \, \frac{t^{r_0}}{r_0!} \ A_{r_1} (x) \,
    \frac{t^{r_1}}{r_1!} \cdots A_{r_{2k}} (x) \, \frac{t^{r_{2k}}}{r_{2k}!} \\
    &=& \sum_{n \ge 0} \ d_n (x) \, \frac{t^n}{n!} \ \, \sum_{k \ge 0} \ x^k
    \left( \sum_{r \ge 1} \ A_r (x) \, \frac{t^r}{r!}  \right)^{2k} \\
    & & \\
    &=& \frac{\dD(t)}{1 - x (\aA(t))^2}
  \end{eqnarray*}
and similarly that

  \begin{eqnarray*}
    \sum_{n \ge 0} \ S^-_n (x) \, \frac{t^n}{n!} &=& \sum_{k, \, r_i \ge 0} \
    x^{k+1} \, d_{r_0} (x) \, \frac{t^{r_0}}{r_0!} \ A_{r_1} (x) \,
    \frac{t^{r_1}}{r_1!} \cdots A_{r_{2k+1}} (x) \, \frac{t^{r_{2k+1}}}{r_{2k+1}!}
    \\ &=& \sum_{n \ge 0} \ d_n (x) \, \frac{t^n}{n!} \ \, \sum_{k \ge 0} \
    x^{k+1} \left( \sum_{r \ge 1} \ A_r (x) \, \frac{t^r}{r!}  \right)^{2k+1} \\
    & & \\
    &=&  \dD(t) \cdot \frac{x \aA(t)}{1 - x (\aA(t))^2}
  \end{eqnarray*}
and conclude that
  $$ \sum_{n \ge 0} \ S_n (x) \, \frac{t^n}{n!} \ = \ \dD(t) \cdot \frac{1 +
     x \aA(t)}{1 - x (\aA(t))^2}. $$

Combining the previous equation with (\ref{eq:Anexp}) and (\ref{eq:dnexp}) we
get, after some straightforward algebraic manipulations, that
  $$ \sum_{n \ge 0} \ S_n (x) \, \frac{t^n}{n!} \ = \ \frac{(1-x) e^{xt}}
    {e^{2xt} - xe^{2t}} \ = \ \sum_{n \ge 0} \ d^B_n (x) \, \frac{t^n}{n!} $$
and the proof follows.
\end{proof}

\medskip
\noindent
\begin{proof}[Proof of Corollary~\ref{cor:main}] As in the second proof of
Theorem~\ref{thm:main}, we denote by $S^+_n (x)$ and $S^-_n (x)$ the right-hand
side of (\ref{eq:main+}) and (\ref{eq:main-}), respectively.

Theorem~\ref{thm:main} shows that $d^B_n (x) = S^+_n (x) + S^-_n (x)$ for every
$n \in \NN$. From the symmetry properties $A_n (x) = x^{n-1} A_n (1/x)$ and $d_n
(x) = x^n \, d_n (1/x)$ of the Eulerian and derangement polynomials for
$\mathfrak{S}_n$ it follows that $S^+_n (x)$ and $S^-_n (x)$ satisfy
(\ref{eq:symf+}) and (\ref{eq:symf-}), respectively. The uniqueness of the
defining properties of $f^+_n (x)$ and $f^-_n (x)$ imply that $f^+_n (x) = S^+_n
(x)$ and $f^-_n (x) = S^-_n (x)$ for every $n \in \NN$. This proves equations
(\ref{eq:main+}) and (\ref{eq:main-}).

The $\gamma$-nonnegativity of $f^+_n (x)$ and $f^-_n (x)$ follows from equations
(\ref{eq:main+}) and (\ref{eq:main-}) and the $\gamma$-nonnegativity of $A_n (x)$
and $d_n (x)$ (see Proposition~\ref{prop:xi+-formula} in the sequel). The last
statement in the corollary follows from (\ref{eq:main++}) and (\ref{eq:main--}).
\end{proof}

Since the polynomials $A_n (x)$ and $d_n (x)$ have nonnegative and symmetric
coefficients and only real roots, we can write
  \begin{equation} \label{eq:Angamma}
    A_n (x) \ = \ (1+x)^{n-1} \ \gamma_n \left( \frac{x}{(1+x)^2} \right)
  \end{equation}
and
  \begin{equation} \label{eq:dngamma}
    d_n (x) \ = \ (1+x)^n \ \xi_n \left( \frac{x}{(1+x)^2} \right)
  \end{equation}
for some polynomials $\gamma_n (x)$ and $\xi_n (x)$ with nonnegative
coefficients. Explicit combinatorial interpretations to these coefficients
are known (see, for instance, \cite[Theorem 5.6]{FSc70} and \cite[Section
4]{AS12}).
Equations (\ref{eq:main+}), (\ref{eq:main-}), (\ref{eq:Angamma}) and
(\ref{eq:dngamma}) imply explicit combinatorial formulas for the polynomials
$\xi^+_n (x) = \sum \xi^+_{n,i} x^i$ and $\xi^-_n (x) = \sum \xi^-_{n,i} x^i$,
appearing in Corollary~\ref{cor:main}, which we record in the following
proposition.

\begin{proposition} \label{prop:xi+-formula}
We have
  \begin{equation} \label{eq:fn+gamma}
    f^+_n (x) \ = \ (1+x)^n \ \xi^+_n \left( \frac{x}{(1+x)^2} \right)
  \end{equation}
and
  \begin{equation} \label{eq:fn-gamma}
    f^-_n (x) \ = \ (1+x)^{n+1} \ \xi^-_n \left( \frac{x}{(1+x)^2} \right),
  \end{equation}
where
  \begin{equation} \label{eq:xin+}
    \xi^+_n (x) \ = \ \sum \ {n \choose r_0, r_1,\dots,r_{2k}} \, x^k \,
      \xi_{r_0} (x) \, \gamma_{r_1} (x) \cdots \gamma_{r_{2k}}(x),
  \end{equation}
  \begin{equation} \label{eq:xin-}
    \xi^-_n (x) \ = \ \sum \ {n \choose r_0, r_1,\dots,r_{2k+1}} \, x^{k+1} \,
      \xi_{r_0}(x) \, \gamma_{r_1} (x) \cdots \gamma_{r_{2k+1}}(x),
  \end{equation}
the sums in the previous equations range as in {\rm (\ref{eq:main+})}
and {\rm (\ref{eq:main-})}, respectively, and $\xi_0 (x) = 1$, $\gamma_0 (x)
= 0$. \qed
\end{proposition}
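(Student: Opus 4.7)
The plan is to prove both displayed formulas by direct substitution, feeding the $\gamma$-expansions (\ref{eq:Angamma}) and (\ref{eq:dngamma}) of the Eulerian and derangement polynomials of type $A$ into the explicit formulas (\ref{eq:main+}) and (\ref{eq:main-}) of Corollary~\ref{cor:main}, and then bookkeeping the powers of $(1+x)$. Setting $y := x/(1+x)^2$, the key elementary identities are $A_r(x) = (1+x)^{r-1}\gamma_r(y)$ for $r \ge 1$, $d_r(x) = (1+x)^r \xi_r(y)$ for $r \ge 0$, and $x^k = (1+x)^{2k} y^k$. Substitution will turn each summand in (\ref{eq:main+}) and (\ref{eq:main-}) into a common factor of $(1+x)^n$, respectively $(1+x)^{n+1}$, times a monomial in $y$ with coefficients given by the polynomials $\xi_{r_0}(y) \gamma_{r_1}(y) \cdots$ that appear on the right-hand sides of (\ref{eq:xin+}) and (\ref{eq:xin-}).

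Concretely, fix a sequence $(r_0, r_1, \ldots, r_{2k})$ of nonnegative integers summing to $n$; since $A_0(x) = 0$, only the tuples with $r_i \ge 1$ for $1 \le i \le 2k$ contribute to (\ref{eq:main+}). For such a tuple,
\begin{equation*}
x^k \, d_{r_0}(x) \prod_{i=1}^{2k} A_{r_i}(x) \; = \; (1+x)^{2k} y^k \cdot (1+x)^{r_0 + \sum_{i=1}^{2k}(r_i - 1)} \, \xi_{r_0}(y) \prod_{i=1}^{2k} \gamma_{r_i}(y),
\end{equation*}
and the combined exponent of $(1+x)$ is $2k + r_0 + \sum_{i=1}^{2k}(r_i - 1) = n$. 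Pulling $(1+x)^n$ outside the sum in (\ref{eq:main+}) and comparing with (\ref{eq:xin+}) yields (\ref{eq:fn+gamma}). The argument for (\ref{eq:fn-gamma}) is the same word for word, but with $2k$ replaced by $2k+1$ and $x^k$ by $x^{k+1}$; the corresponding exponent is $2(k+1) + r_0 + \sum_{i=1}^{2k+1}(r_i - 1) = n + 1$, which supplies the correct overall factor of $(1+x)^{n+1}$.

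I expect no real obstacle beyond careful bookkeeping. The only subtlety is that (\ref{eq:Angamma}) formally contains a factor $(1+x)^{-1}$ when $r_i = 0$; this is harmless, because the convention $A_0(x) = 0$ means those tuples never contribute to (\ref{eq:main+}) or (\ref{eq:main-}) in the first place, and the parallel convention $\gamma_0(x) = 0$ stated in the proposition ensures that they are equally absent from (\ref{eq:xin+}) and (\ref{eq:xin-}). The nonnegativity of the coefficients of $\xi^+_n(x)$ and $\xi^-_n(x)$ — which justifies the $\gamma$-nonnegativity asserted in Corollary~\ref{cor:main} — is then automatic from the nonnegativity of the coefficients of $\gamma_r(x)$ and $\xi_r(x)$ (a well-known fact recalled in the paragraph preceding the proposition), so no additional argument is needed to complete the proof.
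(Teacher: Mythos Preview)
Your proof is correct and follows exactly the approach the paper intends: the proposition is stated with a \qed and the preceding paragraph explains that it follows by substituting (\ref{eq:Angamma}) and (\ref{eq:dngamma}) into (\ref{eq:main+}) and (\ref{eq:main-}). You have simply made the bookkeeping explicit, including the handling of the $r_i=0$ case via the conventions $A_0(x)=0$ and $\gamma_0(x)=0$.
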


For the first few values of $n$ we have
  $$ \xi^+_n (x) \ = \ \begin{cases}
    1, \ \ & \text{if \ $n=0$} \\
    0, \ \ & \text{if \ $n=1$} \\
    3x, \ \ & \text{if \ $n=2$} \\
    7x, \ \ & \text{if \ $n=3$} \\
    15x + 57x^2, \ \ & \text{if \ $n=4$} \\
    31x + 458x^2, \ \ & \text{if \ $n=5$} \\
    63x + 2551x^2 + 2763x^3, \ \ & \text{if \ $n=6$} \\
    127x + 12232x^2 + 46861x^3, \ \ & \text{if \ $n=7$}
   \end{cases} $$
and
  $$ \xi^-_n (x) \ = \ \begin{cases}
    0, \ \ & \text{if \ $n=0$} \\
    x, \ \ & \text{if \ $n=1$} \\
    x, \ \ & \text{if \ $n=2$} \\
    x + 11x^2, \ \ & \text{if \ $n=3$} \\
    x + 54x^2, \ \ & \text{if \ $n=4$} \\
    x + 197x^2 + 361x^3, \ \ & \text{if \ $n=5$} \\
    x + 648x^2 + 4379x^3, \ \ & \text{if \ $n=6$} \\
    x + 2039x^2 + 34586x^3 + 24611x^4, \ \ & \text{if \ $n=7$.}
   \end{cases} $$
We are not aware of any combinatorial interpretations for the coefficients of
$\xi^+_n(x)$ or $\xi^-_n (x)$.

The second proof of Theorem~\ref{thm:main} and the proof of
Corollary~\ref{cor:main} yield the following explicit formulas for the exponential
generating functions of $f^+_n (x)$ and $f^-_n (x)$.
\begin{proposition} \label{prop:f+-exp}
We have
  \begin{equation} \label{eq:fn+exp}
    \sum_{n \ge 0} \ f^+_n (x) \, \frac{t^n}{n!} \ = \ \frac{e^{xt} - xe^t}
    {e^{2xt} - xe^{2t}}
  \end{equation}
and
  \begin{equation} \label{eq:fn-exp}
    \sum_{n \ge 0} \ f^-_n (x) \, \frac{t^n}{n!} \ = \ \frac{x(e^t - e^{xt})}
    {e^{2xt} - xe^{2t}}.
  \end{equation}
\end{proposition}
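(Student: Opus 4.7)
The plan is to leverage the generating function computations already performed in the second proof of Theorem~\ref{thm:main} together with Corollary~\ref{cor:main}. Recall that in that proof the authors introduced auxiliary polynomials $S^+_n(x)$ and $S^-_n(x)$ (the right-hand sides of (\ref{eq:main+}) and (\ref{eq:main-})) and showed by straightforward series manipulation that
\[
\sum_{n \ge 0} S^+_n (x) \, \frac{t^n}{n!} \ = \ \frac{\dD(t)}{1 - x (\aA(t))^2}, \qquad
\sum_{n \ge 0} S^-_n (x) \, \frac{t^n}{n!} \ = \ \frac{x\,\aA(t)\,\dD(t)}{1 - x (\aA(t))^2}.
\]
Corollary~\ref{cor:main} identifies $f^+_n (x) = S^+_n (x)$ and $f^-_n (x) = S^-_n (x)$ for every $n \in \NN$, so these two identities are precisely generating function formulas for $f^+_n(x)$ and $f^-_n(x)$. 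It therefore suffices to substitute the closed-form expressions (\ref{eq:Anexp}) and (\ref{eq:dnexp}) and simplify.

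The key algebraic step is the identity
\[
(e^{xt} - xe^t)^2 - x(e^t - e^{xt})^2 \ = \ (1-x)\,(e^{2xt} - xe^{2t}),
\]
which is verified at once by setting $a = e^{xt}$, $b = e^t$ and expanding $(a-xb)^2 - x(b-a)^2 = (1-x)(a^2 - xb^2)$. Consequently,
\[
1 - x (\aA(t))^2 \ = \ 1 - \frac{x\,(e^t - e^{xt})^2}{(e^{xt} - xe^t)^2} \ = \ \frac{(1-x)\,(e^{2xt} - xe^{2t})}{(e^{xt} - xe^t)^2}.
\]
Dividing $\dD(t) = (1-x)/(e^{xt} - xe^t)$ by this expression immediately yields
\[
\sum_{n \ge 0} f^+_n (x) \, \frac{t^n}{n!} \ = \ \frac{e^{xt} - xe^t}{e^{2xt} - xe^{2t}},
\]
which is (\ref{eq:fn+exp}). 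Multiplying the right-hand side by $x \aA(t) = x(e^t - e^{xt})/(e^{xt} - xe^t)$ cancels the factor $e^{xt} - xe^t$ in the numerator and produces
\[
\sum_{n \ge 0} f^-_n (x) \, \frac{t^n}{n!} \ = \ \frac{x(e^t - e^{xt})}{e^{2xt} - xe^{2t}},
\]
which is (\ref{eq:fn-exp}).

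There is no real obstacle: the substantive work (the convolution identities giving the generating functions in terms of $\aA$ and $\dD$, and the identification of $S^\pm_n$ with $f^\pm_n$) has already been carried out in the second proof of Theorem~\ref{thm:main} and in the proof of Corollary~\ref{cor:main}. All that remains is the bookkeeping simplification above. As a sanity check, one may also note that adding (\ref{eq:fn+exp}) and (\ref{eq:fn-exp}) gives $\sum_{n \ge 0} d^B_n(x)\, t^n/n! = (1-x)e^{xt}/(e^{2xt} - xe^{2t})$, in agreement with (\ref{eq:dnBexp}), providing a consistency check for the decomposition $d^B_n(x) = f^+_n(x) + f^-_n(x)$.
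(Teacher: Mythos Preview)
Your proof is correct and follows exactly the same approach as the paper: invoke the identification $f^\pm_n = S^\pm_n$ from the proof of Corollary~\ref{cor:main}, then substitute (\ref{eq:Anexp}) and (\ref{eq:dnexp}) into the generating functions computed in the second proof of Theorem~\ref{thm:main}. The paper merely says ``straightforward algebraic manipulations'' where you carry out the details explicitly.
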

\begin{proof}
We noticed in the proof of Corollary~\ref{cor:main} that $f^+_n (x) = S^+_n (x)$
and $f^-_n (x) = S^-_n (x)$. Thus, the result follows from the formulas in the
second proof of Theorem~\ref{thm:main} and straightforward algebraic manipulations.
\end{proof}

\section{A combinatorial interpretation}
\label{sec:comb}

This section gives a combinatorial interpretation to the coefficients of $f^+_n (x)$
and $f^-_n (x)$ by exploiting the first proof of Theorem~\ref{thm:main}, given in
Section~\ref{sec:proof}.

Consider a signed permutation $w \in \mathfrak{S} (S)$, where $S = \{a_1,
a_2,\dots,a_n\}$ is as in Section~\ref{subsec:signed}. We denote by $m_w$ the minimum
element of $S$ with respect to the natural total order inherited from $\ZZ$ and set
$B^*_n = \{ w \in B_n: w(m_w) > 0\}$.
\begin{proposition} \label{prop:f+-interpret}
  We have
    \begin{equation} \label{eq:fn+interpret}
      f^+_n (x) \ = \ \sum_{w \in \dD^B_n \cap B^*_n} \ x^{\exc_B (w)}
    \end{equation}
  and
    \begin{equation} \label{eq:fn-interpret}
      f^-_n (x) \ = \ \sum_{w \in \dD^B_n \sm B^*_n} \ x^{\exc_B (w)}
    \end{equation}
for every $n \ge 1$.
\end{proposition}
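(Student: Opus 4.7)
The plan is to leverage the bijection $\varphi : \dD^B_n \to \cC_n$ built in the first proof of Theorem~\ref{thm:main}. Writing $k(w)$ for the $k$ attached to $\varphi(w) = (\sigma_0, \sigma_1, \ldots, \sigma_{k(w)})$, direct inspection of the identity $\iexc_B(w) = \iexc(\sigma_0) + \sum_{i=1}^{k(w)} f(\sigma_i) + \lfloor (k(w)+1)/2 \rfloor$ proved there, combined with the formulas (\ref{eq:main+}) and (\ref{eq:main-}) in Corollary~\ref{cor:main}, shows that
\[
  f^+_n(x) \ = \sum_{\substack{w \in \dD^B_n \\ k(w) \text{ even}}} x^{\iexc_B(w)}, \qquad
  f^-_n(x) \ = \sum_{\substack{w \in \dD^B_n \\ k(w) \text{ odd}}} x^{\iexc_B(w)}.
\]
Since $\exc_B(v) = \iexc_B(v^{-1})$ and $v \mapsto v^{-1}$ is an involution on $\dD^B_n$ (inverses of derangements are derangements), the proposition reduces to the claim that $k(w)$ is even if and only if $w^{-1} \in B^*_n$.

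This parity criterion is the only nontrivial step. Because $m_w$ is the smallest element of $S$ in the natural ordering of $\ZZ$, it appears as the first entry of the last cycle $C_m = (m_w, c_2, \ldots, c_\ell)$ in the standard cycle form of $w$, which forces $w^{-1}(m_w) = c_\ell$. When the word $u = C_{j+1} \cdots C_m$ built in the bijection is nonempty, $c_\ell$ is its last letter and hence lies in the terminal block $u_{k(w)}$; the blocks alternate in sign starting with negatives in $u_1$, so $u_{k(w)}$ is positive exactly when $k(w)$ is even, giving $c_\ell > 0$ if and only if $k(w)$ is even. In the boundary case $u = \emptyset$ one has $S = [n]$, $k(w) = 0$ and $c_\ell > 0$ automatically, and the equivalence persists.

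Combining the two steps and substituting $v = w^{-1}$ yields
\[
  f^+_n(x) \ = \sum_{\substack{w \in \dD^B_n \\ w^{-1} \in B^*_n}} x^{\iexc_B(w)}
           \ = \sum_{v \in \dD^B_n \cap B^*_n} x^{\exc_B(v)},
\]
and (\ref{eq:fn-interpret}) follows by applying the same substitution to the complementary set $\{w : w^{-1} \notin B^*_n\}$. The main obstacle is really just the careful tracking of the terminal block of $u$ in the standard cycle form and handling the degenerate case $u = \emptyset$; everything else is bookkeeping with the definitions of Section~\ref{sec:proof}.
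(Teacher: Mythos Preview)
Your proof is correct and follows essentially the same route as the paper's own proof: both use the bijection $\varphi$ from the first proof of Theorem~\ref{thm:main}, identify the parity of $k$ with the sign of the last letter in the standard cycle form, recognize that letter as $w^{-1}(m_w)$, and then pass to $v=w^{-1}$ to convert $\iexc_B$ into $\exc_B$. Your treatment of the degenerate case $u=\varnothing$ is slightly more explicit than the paper's, but the argument is otherwise identical.
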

\begin{proof}
We will follow the setup of the first proof of Theorem~\ref{thm:main}. Given $w \in
\dD^B_n$ with $\varphi(w) = (\sigma_0, \sigma_1,\dots,\sigma_k)$, we observe that
$k$ is even if and only if the last element in the standard cycle form of $w$ is
positive. Therefore, equation (\ref{eq:main+}) and the argument in the proof of
Theorem~\ref{thm:main} show that
  $$ f^+_n (x) \ = \ \sum \ x^{\iexc_B (w)}, $$
where the sum ranges over all $w \in \dD^B_n$ for which the last element in the
standard cycle form is positive. Since this element equals $w^{-1} (m_w)$, we get
  $$ f^+_n (x) \ = \ \sum_{w \in \dD^B_n: \ w^{-1} (m_w) > 0} \ x^{\iexc_B (w)}
     \ = \ \sum_{w \in \dD^B_n: \ w (m_w) > 0} \ x^{\iexc_B (w^{-1})}
     \ = \ \sum_{w \in \dD^B_n \cap B^*_n} \ x^{\exc_B (w)}.  $$
Equation~(\ref{eq:fn-interpret}) follows from (\ref{eq:fn+interpret}) and
(\ref{eq:dnBdef}), or by a similar argument.
\end{proof}

\section{The relative local $h$-vector}
\label{sec:relative}

This section reviews the definition of the relative local $h$-polynomial of a
simplicial subdivision of a simplex, introduced in \cite[Section~3]{Ath12} and,
independently (in a different level of generality), in \cite{Ni12}, and establishes
some of its main properties (most of them stated without proof in
\cite[Section~3]{Ath12}). The relative local $h$-polynomial of the barycentric
subdivision of the simplex is also computed (Example~\ref{ex:barycentrel}). This
computation will be used in Section~\ref{sec:geom}.

We will fix a field $\kk$ in this section and work with the notion of a homology
(rather than topological) simplicial subdivision over $\kk$, as in \cite{Ath12}.
Thus, in the definition of a subdivision $\sigma: \Delta' \to \Delta$ we require
that the subcomplex $\Delta'_F := \sigma^{-1} (2^F)$ of $\Delta'$ is a homology
(rather than topological) ball over $\kk$ of dimension $\dim(F)$, for every $F \in
\Delta$; see \cite[Section~2]{Ath12} for details. The following concept was
introduced in \cite[Remark~3.7]{Ath12} and (for regular triangulations of
polytopes) in \cite{Ni12}.

\begin{definition} {\rm (\cite[Section~3]{Ath12})} \label{def:relative}
Let $\Gamma$ be a homology subdivision of a $(d-1)$-dimensional simplex $2^V$, with
subdivision map $\sigma: \Gamma \to 2^V$, and let $E \in \Gamma$. The polynomial
  \begin{equation} \label{eq:deflocalhrel}
    \ell_V (\Gamma, E, x) \ = \sum_{\sigma(E) \subseteq F \subseteq V} \
    (-1)^{d - |F|} \, h (\link_{\Gamma_F} (E), x)
  \end{equation}
is the \emph{relative local $h$-polynomial} of $\Gamma$ (with respect to $V$) at $E$.
\end{definition}

Thus, $\ell_V (\Gamma, E, x)$ reduces to the local $h$-polynomial $\ell_V (\Gamma,
x)$ for $E = \varnothing$.

\begin{example} \label{ex:barycentrel} \rm
Let $\Gamma = \sd(2^V)$ be the barycentric subdivision of an $(n-1)$-dimensional
simplex $2^V$ and $E = \{S_1, S_2,\dots,S_k\}$ be a face of $\Gamma$, where $S_1
\subset S_2 \subset \cdots \subset S_k \subseteq V$ are nonempty sets. We will
show that
  \begin{equation} \label{eq:barycentrel}
    \ell_V (\Gamma, E, x) \ = \ d_{r_0}(x) \, A_{r_1}(x) A_{r_2}(x) \cdots A_{r_k}(x),
  \end{equation}
where $r_0 = |V \sm S_k|$ and $r_i = |S_i \sm S_{i-1}|$ for $1 \le i \le k$ (with
the convention $S_0 = \varnothing$).

We recall from Section~\ref{subsec:sub} that the carrier of $E$ in $\Gamma$ is given
by $\sigma(E) = S_k$. Thus the right-hand side of (\ref{eq:deflocalhrel}) is a sum
over all $S_k \subseteq F \subseteq V$. The restriction $\Gamma_F$ is the
barycentric subdivision of $2^F$ and the link of $E$ in this restriction satisfies
$\link_{\Gamma_F} (E) = \Delta_0 \ast \Delta_1 \ast \cdots \ast \Delta_k$, where
$\Delta_i$ is the simplicial complex of all chains of subsets of $V$ which strictly
contain $S_{i-1}$ and are strictly contained in $S_i$, for $1 \le i \le k$, and
$\Delta_0$ is the simplicial complex of all chains of subsets of $V$ which strictly
contain $S_k$ and are strictly contained in $F$. As a result, we have
  \begin{eqnarray*}
    h(\link_{\Gamma_F} (E), x) &=& h(\Delta_0, x) \, h(\Delta_1, x) \cdots
    h(\Delta_k, x) \\
    &=& A_{|F \sm S_k|}(x) \, A_{r_1}(x) A_{r_2}(x) \cdots A_{r_k}(x).
  \end{eqnarray*}
Multiplying this equation with $(-1)^{d - |F|}$, summing over all $S_k \subseteq
F \subseteq V$ and using (\ref{eq:dnA}) we get (\ref{eq:barycentrel}).
\qed
\end{example}

Our motivation for introducing the relative local $h$-polynomial comes from the
following statement (for another motivation, see \cite[Section~3]{Ni12}).

\begin{proposition} {\rm (\cite[Proposition~3.6]{Ath12})}
                    \label{prop:localrelformula}
  For every homology subdivision $\Gamma$ of the simplex $2^V$ and every homology
  subdivision $\Gamma'$ of $\Gamma$ we have
    \begin{equation} \label{eq:localrelformula}
      \ell_V (\Gamma', x) \ = \, \sum_{E \in \Gamma} \ \ell_E (\Gamma'_E, x) \,
      \ell_V (\Gamma, E, x).
    \end{equation}
\end{proposition}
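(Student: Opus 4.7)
The plan is to derive (\ref{eq:localrelformula}) directly from Stanley's formula in Theorem~\ref{thm:stalocal}(a) and the inclusion-exclusion that defines (\ref{eq:deflocalh}) and (\ref{eq:deflocalhrel}), by interchanging two summations. Writing $d = |V|$ throughout, I would begin from the definition
\[
\ell_V(\Gamma', x) \ = \ \sum_{F \subseteq V} \, (-1)^{d - |F|} \, h(\Gamma'_F, x).
\]
For each $F \subseteq V$, the restriction $\Gamma'_F$ is naturally a homology subdivision of the restriction $\Gamma_F$, which is pure of dimension $|F|-1$. Applying Theorem~\ref{thm:stalocal}(a) to this pair yields
\[
h(\Gamma'_F, x) \ = \ \sum_{E \in \Gamma_F} \, \ell_E(\Gamma'_E, x) \, h(\link_{\Gamma_F}(E), x),
\]
after the identification $(\Gamma'_F)_E = \Gamma'_E$ for every $E \in \Gamma_F$.

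Next I would substitute this expression back and interchange the order of summation. For a fixed face $E \in \Gamma$ with $\Gamma$-carrier $\sigma(E) \subseteq V$, the condition $E \in \Gamma_F$ is equivalent to $\sigma(E) \subseteq F$. The interchange gives
\[
\ell_V(\Gamma', x) \ = \ \sum_{E \in \Gamma} \, \ell_E(\Gamma'_E, x) \, \Biggl( \sum_{\sigma(E) \subseteq F \subseteq V} (-1)^{d - |F|} h(\link_{\Gamma_F}(E), x) \Biggr),
\]
and by Definition~\ref{def:relative} the inner parenthesis is exactly $\ell_V(\Gamma, E, x)$, which establishes (\ref{eq:localrelformula}). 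A useful sanity check is the case $E = \varnothing$, where $\sigma(E) = \varnothing$, $\Gamma'_\varnothing = \{\varnothing\}$, $\ell_\varnothing(\{\varnothing\}, x) = 1$, and the inner sum recovers the ordinary $\ell_V(\Gamma, x)$; the formula thus consistently refines Stanley's identity by stratifying its ``$E \in \Gamma$'' sum via the inclusion $\sigma(E) \subseteq F$.

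The main obstacle is bookkeeping with the carrier maps. One must verify that restriction is transitive in the sense that $(\Gamma'_F)_E = \Gamma'_E$ whenever $E \in \Gamma_F$, and that the $\Gamma_F$-carrier of a face of $\Gamma'_F$ agrees with its $\Gamma$-carrier so that the occurrence of $\link_{\Gamma_F}(E)$ from Stanley's formula matches the one in Definition~\ref{def:relative}. Both facts follow by unwinding the definition of a homology subdivision: the $2^V$-carrier of a face of $\Gamma'$ is the $2^V$-image of its $\Gamma$-carrier, so the condition ``$\Gamma$-carrier is contained in $E$'' automatically forces ``$2^V$-carrier is contained in $F$'' when $E \in \Gamma_F$. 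These checks are routine, and once they are in place everything else is a straightforward application of Theorem~\ref{thm:stalocal}(a) and a Fubini-style swap of summations.
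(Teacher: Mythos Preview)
The paper does not give its own proof of this proposition; it is quoted from \cite[Proposition~3.6]{Ath12} and used as a black box. Your argument is correct and is the natural one: expand $\ell_V(\Gamma',x)$ via its defining alternating sum over $F\subseteq V$, apply Theorem~\ref{thm:stalocal}(a) to each $h(\Gamma'_F,x)$ viewed as a subdivision of $\Gamma_F$, then interchange the two sums and recognize Definition~\ref{def:relative}. The compatibility checks you isolate---that $(\Gamma'_F)_E=\Gamma'_E$ and that $E\in\Gamma_F$ iff $\sigma(E)\subseteq F$---follow directly from the fact that the $2^V$-carrier of a face of $\Gamma'$ equals the $2^V$-carrier of its $\Gamma$-carrier, which is the defining property of a composite subdivision. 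This is almost certainly the argument in \cite{Ath12} as well.
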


We now confirm that the polynomial $\ell_V (\Gamma, E, x)$ shares two of the main
properties of $\ell_V (\Gamma, x)$ and deduce a monotonicity property of local
$h$-vectors. These results were stated without proof in \cite[Remark~3.7]{Ath12}.
Here we will sketch the proof, which follows closely ideas of \cite{Sta92} and
their refinements in \cite{Ath10}. For that reason, we will assume familiarity
with the corresponding proofs in \cite{Ath10, Sta92}.

\begin{theorem} \label{thm:relative}
Let $V$ be a set with $d$ elements.
  \begin{itemize}
    \item[(a)]
      The relative local $h$-polynomial $\ell_V (\Gamma, E, x)$ has symmetric
      coefficients, in the sense that
        \begin{equation} \label{eq:relsymm}
          x^{d-|E|} \, \ell_V (\Gamma, E, 1/x) \ = \ \ell_V (\Gamma, E, x),
        \end{equation}
      for every homology subdivision $\Gamma$ of the simplex $2^V$ and every $E \in
      \Gamma$.
    \item[(b)]
      The relative local $h$-polynomial $\ell_V (\Gamma, E, x)$ has nonnegative
      coefficients for every quasi-geometric homology subdivision $\Gamma$ of the
      simplex $2^V$ and every $E \in \Gamma$.
  \end{itemize}
\end{theorem}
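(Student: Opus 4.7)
The plan is to adapt the proofs of parts (b) and (c) of Theorem~\ref{thm:stalocal} from \cite{Sta92}, in the refined form given in \cite{Ath10}, to the relative setting. Both parts start from the same rewriting of the defining sum. Applying the face expansion $h(\Delta, x) = \sum_{G \in \Delta} x^{|G|}(1-x)^{\dim\Delta + 1 - |G|}$ to each link $\link_{\Gamma_F}(E)$, substituting $G \mapsto G \cup E$ to convert faces of the link into faces of $\Gamma_F$ containing $E$, and interchanging the orders of summation over $F$ and over such $G$ (noting that $E \subseteq G$ forces $\sigma(E) \subseteq \sigma(G)$, so that the constraint $\sigma(E) \subseteq F$ is automatically inherited from $\sigma(G) \subseteq F$), one obtains
\begin{equation*}
\ell_V(\Gamma, E, x) \ = \ \sum_{G \in \Gamma : \, E \subseteq G} \ x^{|G|-|E|} \ \sum_{\sigma(G) \subseteq F \subseteq V} (-1)^{d-|F|}(1-x)^{|F|-|G|}.
\end{equation*}
A direct binomial computation reduces the inner sum to $(-1)^{d-|\sigma(G)|} x^{d-|\sigma(G)|}(1-x)^{|\sigma(G)|-|G|}$, which is the common starting point for both parts.

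For part (a), I would substitute $x \mapsto 1/x$ in the resulting expression for $\ell_V(\Gamma, E, x)$ and multiply through by $x^{d-|E|}$; after simplification, the desired symmetry reduces to an identity that groups faces $G \supseteq E$ by their carrier $\sigma(G)$ and compares weighted face counts in the homology ball $\Gamma_{\sigma(G)}$ with those in its boundary $\partial \Gamma_{\sigma(G)}$. The needed input is precisely the Dehn-Sommerville-type reciprocity between $h(\Gamma_F, x)$ and $h(\partial \Gamma_F, x)$ employed in the proof of the non-relative symmetry in \cite[Theorem~3.3]{Sta92}; since $E$ is carried along passively throughout the argument, the reciprocity transfers to the relative setting with only notational changes.

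For part (b), the quasi-geometric hypothesis guarantees $|G| \le |\sigma(G)|$ for every face $G \in \Gamma$, so each factor $(1-x)^{|\sigma(G)|-|G|}$ has nonnegative exponent. The main obstacle is then to show that, despite the sign $(-1)^{d-|\sigma(G)|}$ and the alternating coefficients of $(1-x)^{|\sigma(G)|-|G|}$, the total sum has nonnegative coefficients in $x$. To overcome this, I would follow the combinatorial rearrangement used in Stanley's proof of Theorem~\ref{thm:stalocal}(c), replacing ``faces of $\Gamma$'' by ``faces of $\Gamma$ containing $E$'' throughout. The argument pairs faces with carriers strictly smaller than $V$ via a sign-reversing involution whose fixed points correspond to faces with full carrier $V$, yielding an explicit nonnegative expression for $\ell_V(\Gamma, E, x)$ as a generating function over such ``interior'' faces lying in the open star of $E$ in $\Gamma$. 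The case computed in Example~\ref{ex:barycentrel}, where the relative local $h$-polynomial of the barycentric subdivision factorizes into Eulerian and derangement polynomials, is consistent with this picture and provides a concrete check of the strategy.
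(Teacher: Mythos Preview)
Your face-by-face rewriting of $\ell_V(\Gamma,E,x)$ is correct, and for part~(a) it can in principle be pushed through: after grouping the faces $G\supseteq E$ by their carrier $F=\sigma(G)$, the inner sums become interior $h$-polynomials of the balls $\link_{\Gamma_F}(E)$, and the reciprocity you invoke is exactly the one the paper uses. The paper's route is more direct, though: it applies the ball reciprocity $x^{|F|-|E|}\,h(\link_{\Gamma_F}(E),1/x)=h(\inte(\link_{\Gamma_F}(E)),x)$ (from \cite[Proposition~2.1]{Ath10}) termwise in the defining sum, then an inclusion-exclusion identity expressing each interior $h$-polynomial as an alternating sum of $h(\link_{\Gamma_G}(E),x)$ over $\sigma(E)\subseteq G\subseteq F$, and finally swaps the order of summation. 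Your expansion into individual faces and subsequent recollapsing is essentially this argument run in a less convenient order.

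Part~(b), however, has a genuine gap. Stanley's proof of Theorem~\ref{thm:stalocal}(c) (namely \cite[Theorem~4.6]{Sta92}) is \emph{not} a combinatorial rearrangement or a sign-reversing involution: it is an algebraic argument. One constructs a special linear system of parameters for the face ring $\kk[\Gamma]$---this is precisely where the quasi-geometric hypothesis enters, not to guarantee $|G|\le|\sigma(G)|$ (that inequality holds for any homology subdivision, since $G\in\Gamma_{\sigma(G)}$ and $\dim\Gamma_{\sigma(G)}=|\sigma(G)|-1$)---and identifies $\ell_V(\Gamma,x)$ as the Hilbert series of a graded quotient module, forcing nonnegativity. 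No elementary involution proof of local $h$-vector nonnegativity is known, and your sketch does not supply one: even the ``interior'' summands with $\sigma(G)=V$ carry factors $(1-x)^{d-|G|}$ with alternating coefficients, so restricting to them does not give a manifestly nonnegative expression. The paper's proof of~(b) follows \cite[Theorem~5.1]{Ath10}, adapting Stanley's algebraic argument with $\link_\Gamma(E)$ in the role of $\Delta$, $d-|E|$ in the role of $d$, and $d-|\sigma(E)|$ in the role of the rank parameter.
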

\begin{proof}
(a) The proof of \cite[Theorem 4.2]{Ath10} can be adapted as follows. Using the
defining equation (\ref{eq:deflocalhrel}) and \cite[Proposition~2.1]{Ath10}, we find
that
  \begin{eqnarray*}
    x^{d-|E|} \, \ell_V (\Gamma, E, 1/x) &=& \sum_{\sigma(E) \subseteq F \subseteq
    V} (-1)^{d - |F|} \, x^{d-|E|} \, h (\link_{\Gamma_F} (E), 1/x) \\
          & & \\
    &=& \sum_{\sigma(E) \subseteq F \subseteq V} (-x)^{d - |F|} \, h
    (\inte(\link_{\Gamma_F} (E)), x).
  \end{eqnarray*}
An inclusion-exclusion argument, similar to the one in the proof of
\cite[(4.3)]{Ath10}, shows that
  $$ h (\inte(\link_{\Gamma_F} (E)), x) \ = \ \sum_{\sigma(E) \subseteq G
     \subseteq F} (x-1)^{|F|-|G|} \, h (\link_{\Gamma_G} (E), x). $$

Replacing $h (\inte(\link_{\Gamma_F} (E)), x)$ in the first formula by the right-hand
side of the previous equation and changing the order of summation, as in the proof of
\cite[Theorem 4.2]{Ath10}, results in (\ref{eq:relsymm}).

\medskip
(b) The special case $E = \varnothing$ is equivalent to part (iii) of
\cite[Theorem~3.3]{Ath12} (essentially, part (c) of Theorem~\ref{thm:stalocal}). The
general case follows by the argument in the proof of \cite[Theorem~5.1]{Ath10}
(generalizing that in the proof of \cite[Theorem~4.6]{Sta92}), where the role of
$\Delta$ in that proof is played by $\link_\Gamma (E)$, the role of $d$ is played by
$d - |E| = \dim \link_\Gamma (E) + 1$ and the role of $e$ is played by the rank $d -
|\sigma(E)|$ of the interval $[\sigma(E), V]$ in the lattice of subsets of $V$.
\end{proof}

For polynomials $p(x), q(x) \in \RR[x]$ we write $p(x) \ge q(x)$ if the difference
$p(x) - q(x)$ has nonnegative coefficients.

\begin{corollary} \label{cor:localmonotone}
For every quasi-geometric homology subdivision $\Gamma$ of the simplex $2^V$ and
every quasi-geometric homology subdivision $\Gamma'$ of $\Gamma$, we have $\ell_V
(\Gamma', x) \ge \ell_V (\Gamma, x)$.
\end{corollary}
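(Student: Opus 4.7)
The plan is to combine Proposition~\ref{prop:localrelformula} with the nonnegativity statement just proved in Theorem~\ref{thm:relative}(b), and to extract $\ell_V(\Gamma, x)$ as the single $E = \varnothing$ summand. First, I would apply Proposition~\ref{prop:localrelformula} to write
\begin{equation*}
  \ell_V(\Gamma', x) \ = \ \sum_{E \in \Gamma} \ \ell_E(\Gamma'_E, x) \, \ell_V(\Gamma, E, x).
\end{equation*}
The $E = \varnothing$ term requires identification: by the remark following Definition~\ref{def:relative}, $\ell_V(\Gamma, \varnothing, x) = \ell_V(\Gamma, x)$, while $\Gamma'_\varnothing = \{\varnothing\}$ is the trivial subdivision of the $(-1)$-simplex, so $\ell_\varnothing(\Gamma'_\varnothing, x) = 1$. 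Hence this term contributes precisely $\ell_V(\Gamma, x)$ to the sum.

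Next, I would show that every remaining term has nonnegative coefficients. For a nonempty $E \in \Gamma$, the restriction $\Gamma'_E$ is a homology subdivision of the simplex $2^E$ (by the definition of a subdivision), and it inherits the quasi-geometric property from $\Gamma'$ because the carrier in $2^E$ of a vertex of $\Gamma'_E$ agrees with its carrier in $\Gamma$ (viewed as an element of $2^E$). Thus Theorem~\ref{thm:stalocal}(c) (in its homology version, cf.\ the proof of Theorem~\ref{thm:relative}(b)) yields that $\ell_E(\Gamma'_E, x)$ has nonnegative coefficients. Combined with Theorem~\ref{thm:relative}(b), which gives nonnegativity of $\ell_V(\Gamma, E, x)$, each product $\ell_E(\Gamma'_E, x) \, \ell_V(\Gamma, E, x)$ has nonnegative coefficients. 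Subtracting off the $E = \varnothing$ contribution therefore yields
\begin{equation*}
  \ell_V(\Gamma', x) - \ell_V(\Gamma, x) \ = \ \sum_{\varnothing \ne E \in \Gamma} \ \ell_E(\Gamma'_E, x) \, \ell_V(\Gamma, E, x) \ \ge \ 0,
\end{equation*}
which is the desired inequality.

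There is essentially no obstacle beyond the bookkeeping above; the real content has already been packaged into Proposition~\ref{prop:localrelformula} and Theorem~\ref{thm:relative}(b). The only point that deserves a line of justification is the inheritance of the quasi-geometric property by each restriction $\Gamma'_E$, which is immediate from the definition but should be stated explicitly so that Theorem~\ref{thm:stalocal}(c) may be invoked.
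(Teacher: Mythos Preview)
Your proposal is correct and matches the paper's own argument exactly: apply Proposition~\ref{prop:localrelformula}, identify the $E=\varnothing$ summand as $\ell_V(\Gamma,x)$, and observe that all remaining summands are coefficientwise nonnegative by Theorem~\ref{thm:stalocal}(c) and Theorem~\ref{thm:relative}(b). The paper's proof is just a terser version of what you wrote; your added remarks on why the restrictions $\Gamma'_E$ remain quasi-geometric are fine supplementary detail.
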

\begin{proof}
The right-hand side of (\ref{eq:localrelformula}) reduces to $\ell_V(\Gamma, x)$
for $E = \varnothing$. The other terms in the sum are nonnegative by
Theorems~\ref{thm:stalocal} (c) and \ref{thm:relative} (b) and the proof follows.
\end{proof}

\section{A geometric interpretation}
\label{sec:geom}

This section formally defines the simplicial subdivision $K_n$ and gives two proofs
of Theorem~\ref{thm:localint}, one using the theory of (relative) local $h$-vectors
(specifically, Proposition~\ref{prop:localrelformula}) and another using generating
functions.

Let $\Delta$ be a simplicial complex. The \emph{cubical barycentric subdivision} (see,
for instance, \cite[Section~2.3]{BBC97}) of $\Delta$, denoted
$\sd_c (\Delta)$, is defined as the set of all nonempty closed intervals $[F, G]$ in
the face poset $\fF(\Delta)$, partially ordered by inclusion. It follows from
\cite[Theorem~6.1~(a)]{Wa88} and \cite[Equation~(3.24)]{StaEC1} that the order complex,
say $\Delta'$, of $\sd_c (\Delta)$ is homeomorphic to $\Delta$. Moreover, $\Delta'$
is naturally a simplicial subdivision of $\Delta$: the carrier of a face of $\Delta'$
is the maximum element of the largest of the intervals in the corresponding chain of
intervals of $\fF(\Delta)$. We will denote by $K_n$ the order complex of $\sd_c
(2^{[n]})$, so that $K_n$ is a simplicial subdivision of the simplex $2^{[n]}$ (see
Figure~\ref{fig:K3} for the case $n=3$). We note that $K_n$ is the special case
$N=1$ of a subdivision of the simplex considered in \cite[p.~414]{CMS84}.

The following statement is an essential step for both proofs of
Theorem~\ref{thm:localint} which will be given in this section.

\begin{proposition} \label{prop:Knhpoly}
We have $h (K_n, x) = B^+_n (x)$ for $n \in \NN$.
\end{proposition}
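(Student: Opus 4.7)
The plan is to realize $K_n$ as the order complex of the poset $P := \sd_c(2^{[n]})$ of nonempty intervals $[F,G]$ in $\fF(2^{[n]})$ (with $[F,G] \le [F',G']$ iff $F' \subseteq F$ and $G \subseteq G'$) and to exhibit a shelling of $K_n$ whose shelling data match $\des_B$ on $B^+_n$.

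First I would describe facets explicitly. A facet of $K_n$ is a maximal chain $[F_1, F_1] \lessdot [F_2, G_2] \lessdot \cdots \lessdot [F_n, G_n] = [\{i\}, [n]]$ in $P$, comprising $n-1$ covering relations, each either removing one element from the lower set $F$ or adding one element to the upper set $G$. Reading these covers from top to bottom and recording the element $y_k$ handled at step $k$ together with a sign $\varepsilon_k$ ($+$ if $y_k$ is added to $F$ and $-$ if $y_k$ is removed from $G$), and then setting $w(n) = i$ and $w(n-k) = \varepsilon_k y_k$ for $1 \le k \le n-1$, defines a bijection $\Phi$ between facets of $K_n$ and $B^+_n$ (both sides having cardinality $n! \cdot 2^{n-1}$).

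Next I would order the facets via the reverse lex order on the sequences $(w(n), w(n-1), \ldots, w(1))$ and verify that this is a shelling. The central combinatorial step is to identify the shelling restriction
$$R(F_w) \ = \ \{v \in F_w \,:\, F_w \sm \{v\} \subseteq F_{w'} \text{ for some earlier } w' \}$$
with the set of $B$-descent positions of $w$: (i) the bottommost vertex $[F_1, F_1]$ is replaceable only by $[G_2, G_2]$, which flips the sign of $w(1)$ and precedes in our order iff $w(1) < 0$, matching the descent at position $0$; (ii) an internal vertex $[F_k, G_k]$ admits a unique alternative replacement, whose associated signed permutation $w^{(k)}$ is obtained from $w$ by swapping positions $k-1$ and $k$, and this precedes $F_w$ iff $w(k-1) > w(k)$; (iii) the top vertex $[\{i\}, [n]]$ admits an alternative extension exactly when the last cover is of ``remove from $F$'' type (i.e., $w(n-1) > 0$), and that alternative is earlier iff $w(n-1) > w(n)$. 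Combining these identifications yields $|R(F_w)| = \des_B(w)$.

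The shelling property itself is verified along the way (each new facet attaches to the complex generated by earlier facets along a pure codimension-one subcomplex determined by its restriction set), so the standard shelling identity $h(K_n, x) = \sum_F x^{|R(F)|}$ gives $h(K_n, x) = \sum_{w \in B^+_n} x^{\des_B(w)} = B^+_n(x)$. The main obstacle is the case analysis identifying each alternative extension of a chain as an adjacent transposition of the associated signed permutation (with sign flips at the two endpoints), and then matching ``earlier in the shelling order'' with $B$-descents; the classical argument that $h(\sd(2^{[n]}), x) = A_n(x)$ via descents of ordinary permutations provides a direct template.
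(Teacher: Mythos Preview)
Your approach is genuinely different from the paper's. The paper identifies $\sd_c(2^{[n]})$ with the poset $P_n$ of sign-vectors and computes the flag $f$-vector directly: it shows that chains in $P_n$ with a given rank set $S$ biject with $w \in B^+_n$ having $\Des_B(w) \subseteq n-S$, and then a single inclusion--exclusion yields $h_k(K_n) = |\{w \in B^+_n : \des_B(w) = k\}|$. No shelling, no restriction sets.

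Your bijection $\Phi$ between facets and $B^+_n$ is correct, and your flip analysis is right: replacing $c_1$ negates $w(1)$, replacing an internal $c_k$ swaps $w(k-1), w(k)$, and replacing $c_n$ (when possible) swaps $w(n-1), w(n)$. Under the interpretation of ``reverse lex'' as lexicographic order on $(-w(n), \ldots, -w(1))$ (equivalently: compare from position $n$ downward, larger entry earlier), all three of your ``earlier iff descent'' claims check out; note however that under most other readings of ``reverse lex'' they do not, so this should be made precise.

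The real gap is the shelling verification. Knowing that for each $c_k \in R(F_w)$ there is \emph{one} earlier facet sharing $F_w \setminus \{c_k\}$ is only half the battle. You must also show that for \emph{every} earlier $w' \prec w$, the intersection $F_w \cap F_{w'}$ is contained in $F_w \setminus \{c_k\}$ for some $c_k \in R(F_w)$; equivalently, that $R(F_w)$ itself is not a face of any earlier facet. Your text (``verified along the way'') asserts this but does not argue it. Concretely: take $w = (-2, 1, 3)$, so $R(F_w) = \{c_1\}$, and $w' = (1, 2, 3) \prec w$ with largest differing index $j = 2$. Here $c_j = c_2 \notin R(F_w)$, so your flip analysis does not directly supply the needed $c_k$; one must separately check that $c_1 \notin F_{w'}$ (it is not, but this requires an argument beyond the three local flip cases). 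A clean way to fill this is to show that the sets $T_k^w = \{w(k),\ldots,w(n)\}$ agree with $T_k^{w'}$ for all $k$ with $k-1 \in \Des_B(w)$ only when $w' = w$, or alternatively to exhibit an EL-labeling of $\hat P_n$; either route is more work than your sketch suggests. The paper's flag-vector argument sidesteps all of this.
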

\begin{proof}
The poset $\sd_c (2^{[n]})$ consists of all intervals of the form $[A, B]$, where
$\varnothing \neq A \subseteq B \subseteq [n]$, partially ordered by inclusion. To
describe this poset differently, we consider the following poset $(P_n, \preceq)$.
The elements of $P_n$ are the subsets of $\Omega_n$ which contain at least one
positive number and at most one number from each set $\{i , -i\}$ for $i \in \{1,
2,\dots,n \}$; the partial order is reverse inclusion. We observe that the map
$\varphi: \sd_c (2^{[n]}) \to P_n$ defined by $\varphi([A, B]) = \ A \, \cup \,
(-([n] \sm B) )$ is a poset isomorphism. Thus, we may identify $K_n$ with the order
complex of $P_n$.

For $S = \{s_1, s_2,\dots,s_k\} \subseteq [n]$ with $s_1 < s_2 < \cdots < s_k$, we
define $\alpha_{P_n}(S)$ as the number of chains $F_1 \prec F_2 \prec \cdots \prec
F_k$ in $P_n$ such that $|F_i| = s_{k-i+1}$ for $i \in \{1, 2,\dots,k\}$. The map
$\alpha_{P_n}: 2^{[n]} \rightarrow \NN$ is the flag $f$-vector of $P_n$; see
\cite[Section~3.13]{StaEC1}. The chains of $P_n$ enumerated by $\alpha_{P_n}(S)$
are in one-to-one correspondence with the elements $w \in B^+_n$ for which
$\Des_B(w) \subseteq n-S := \{ n-s: \, s \in S\}$. Indeed, given such a chain, the
corresponding element of $B^+_n$ consists of the elements of $[n] \sm \{|s|: s
\in F_1\}$ in increasing order, followed by those of $F_1 \sm F_2$ in increasing
order and so on, followed at the end by the elements of $F_k$ in increasing order.

Recall that the flag $h$-vector $\beta_{P_n}:2^{[n]} \rightarrow \ZZ$ of $P_n$ is
defined by
  $$ \beta_{P_n}(S) \ = \ \sum_{T \subseteq S} (-1)^{|S \sm T|}
     \ \alpha_{P_n}(T), $$
for $S \subseteq [n]$, or equivalently, by
  $$ \alpha_{P_n}(S) \ = \ \sum_{T\subseteq S} \beta_{P_n}(T) $$
for $S\subseteq [n]$. Since $\alpha_{P_n}(S)$ enumerates signed permutations $w
\in B^+_n$ for which $\Des_B(w) \subseteq n -S$, by the Principle of
Inclusion-Exclusion we get that $\beta_{P_n}(S)$ enumerates signed permutation
$w \in B_n^+$ for which $\Des_B(w) = n-S$. The result follows from this
interpretation by recalling \cite[Section~3.13]{StaEC1} that
  $$ h_k(K_n) \ = \ \sum_{S\subseteq [n], |S|=k} \beta_{P_n}(S) $$
and switching $S$ to $n-S$ in the previous equation.
\end{proof}

Our first proof of Theorem~\ref{thm:localint} will be based on the fact that
$K_n$ can be viewed as a subdivision of the barycentric subdivision $\sd
(2^{[n]})$. To explain how, we consider the following setup. Let $V = \{v_1,
v_2,\dots,v_d\}$ be a set totally ordered by $v_1 < v_2 < \cdots < v_d$. We
recall that $\sd_c (V)$ denotes the poset of intervals in $V$ of the form
$[v_i, v_j] = \{v_i, v_{i+1},\dots,v_j\}$ for $1 \le i \le j \le n$, partially
ordered by inclusion. We denote by $\Gamma$ the order complex of $\sd_c (V)$,
consisting of all chains of such intervals. For such a chain $G \in \Gamma$,
we define $\sigma(G)$ as the set of all endpoints of the intervals in $G$.
Thus we have a well defined map $\sigma: \Gamma \to 2^V$.

  \begin{figure}
  \epsfysize = 2.0 in \centerline{\epsffile{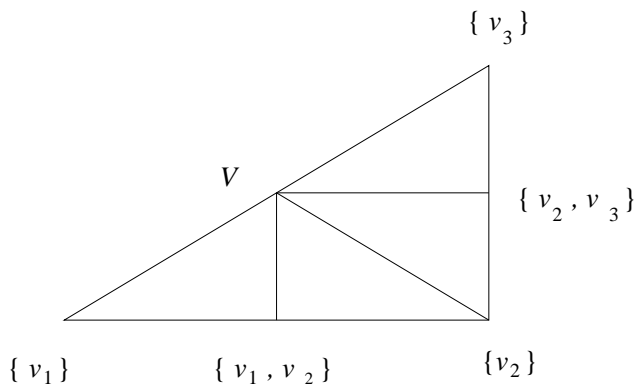}}
  \caption{The subdivision $\Gamma$ for $d=3$}
  \label{fig:Int3}
  \end{figure}

\begin{lemma} \label{lem:Knbary}
Under the previous assumptions and notation, the map $\sigma: \Gamma \to 2^V$
turns $\Gamma$ into a geometric subdivision of $2^V$. The number of facets of
$\Gamma$ is equal to $2^{d-1}$, where $d$ is the number of elements of $V$.
\end{lemma}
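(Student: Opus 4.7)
The lemma makes two independent claims; I address them in turn. For the facet count, each facet of $\Gamma$ corresponds to a maximal chain $\{v_{i_0}\} = I_0 \subsetneq I_1 \subsetneq \cdots \subsetneq I_{d-1} = V$ in $\sd_c(V)$, where each $I_k$ is obtained from $I_{k-1}$ by adjoining one element at the left or right end of the current interval. Recording the $d-1$ left/right choices yields a bijection with $\{L, R\}^{d-1}$; the starting singleton is recovered from the number of $L$'s. Hence $\Gamma$ has exactly $2^{d-1}$ facets.

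For the subdivision property I would argue by induction on $d$. The key input is the identification of the restriction $\Gamma_F := \sigma^{-1}(2^F)$ for any $F \subseteq V$: its faces are chains of intervals in $\sd_c(V)$ with all endpoints in $F$, and the subposet of such intervals is isomorphic to $\sd_c(F)$ (where $F$ is viewed as a chain in its own right) via $[v_i, v_j]_V \leftrightarrow [v_i, v_j]_F$. Hence $\Gamma_F \cong \Gamma(F)$, which by the inductive hypothesis is a geometric simplicial subdivision of the simplex $2^F$ of dimension $|F|-1$. The preimage $\sigma^{-1}(F)$ corresponds to those chains of $\sd_c(F)$ whose endpoints cover $F$, which is precisely $\Gamma(F) \sm \bigcup_{F' \subsetneq F} \Gamma(F')$, i.e., the interior of $\Gamma(F)$ as a subdivision of $2^F$. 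This verifies both subdivision axioms at once.

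The induction thus reduces to proving that $\Gamma(V)$ is a simplicial $(d-1)$-ball for each $d \ge 1$. The cases $d = 1, 2$ are direct. For $d \ge 3$, the maximum $[v_1, v_d] \in \sd_c(V)$ lies in every maximal chain, hence in every facet of $\Gamma(V)$, so $\Gamma(V) = \{[v_1, v_d]\} \ast L$, where $L$ is the order complex of $\sd_c(V) \sm \{[v_1, v_d]\}$. Every proper sub-interval of $[v_1, v_d]$ lies in $[v_1, v_{d-1}]$ or $[v_2, v_d]$, so $L = L_1 \cup L_2$ with $L_1 := \Gamma([v_1, v_{d-1}])$, $L_2 := \Gamma([v_2, v_d])$, and $L_1 \cap L_2 = \Gamma([v_2, v_{d-1}])$. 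By induction, $L_1$ and $L_2$ are $(d-2)$-balls and $L_1 \cap L_2$ is a $(d-3)$-ball. The interior/boundary description above places $L_1 \cap L_2$ as a full principal subcomplex of $\partial L_i$ for $i = 1, 2$ (namely as the restriction of $L_i$ to the proper face $\{v_2, \dots, v_{d-1}\}$). A standard PL gluing argument then shows $L$ is a $(d-2)$-ball, whence its cone $\Gamma(V)$ is a $(d-1)$-ball.

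The geometric realization is achieved by placing each vertex $[v_i, v_j]$ at a chosen interior point $p_{ij}$ of the edge $\overline{v_i v_j}$ of $2^V$ (with $p_{ii} = v_i$) and rerunning the induction in the PL category. The main obstacle will be the gluing step: one must verify that $L_1 \cap L_2 \hookrightarrow \partial L_i$ sits along a boundary face suitable for the PL ball gluing theorem, and this is exactly where the inductive control on $\inte \Gamma(F)$ is essential.
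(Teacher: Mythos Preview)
Your proof is correct and follows essentially the same inductive route as the paper: both recognize that $\Gamma(V)$ is the cone from the vertex $[v_1,v_d]$ over $\Gamma(V\setminus\{v_d\})\cup\Gamma(V\setminus\{v_1\})$, with the paper building the geometric triangulation of $\Sigma_V$ directly from this cone description (placing the apex on the edge $\overline{v_1v_d}$ and leaving the routine checks to the reader) while you first establish the abstract ball structure via PL gluing and treat the geometric realization afterward. Your bijective facet count, encoding maximal chains by their left/right extension sequences in $\{L,R\}^{d-1}$, is more explicit than in the paper, where $2^{d-1}$ is implicit from the doubling at each inductive step.
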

\begin{proof}
Let $\Sigma_V$ be a geometric $(d-1)$-dimensional simplex whose vertices are
labeled by the singleton subsets of $V = \{v_1, v_2,\dots,v_d\}$. We will
construct a geometric simplicial subdivision (triangulation) $\Gamma_V$ of
$\Sigma_V$ whose vertices are labeled (in a one-to-one fashion) with the
closed intervals in the total order $V$, so that: (a) the singleton intervals
label the vertices of $\Sigma_V$; (b) the point labeled by a non-singleton
interval $I = [v_i, v_j] \in \sd_c (V)$ lies in the relative interior of the
edge of $\Sigma_V$ whose endpoints are labeled by $\{v_i\}$ and $\{v_j\}$; and
(c) the faces of $\Gamma_V$ correspond to the chains of intervals (see
Figure~\ref{fig:Int3} for the case $d=3$).

We proceed by induction on $d$. The triangulation $\Gamma_V$ is a single
point for $d=1$ and the triangulation of a line segment with one interior
point (labeled by $\{v_1, v_2\}$) for $d=2$. We assume $d \ge 3$ and
set $U = V \sm \{v_d\}$ and $W = V \sm \{v_1\}$. We choose the simplices
$\Sigma_U$ and $\Sigma_W$ as the codimension one faces of $\Sigma_V$ which
correspond to $U$ and $V$ and, using the inductive hypothesis, triangulations
$\Gamma_U$ and $\Gamma_W$ of these two simplices having properties (a), (b)
and (c) with respect to the totally ordered subsets $U$ and $W$ of $V$,
respectively. Clearly, we may choose these triangulations to have the same
restriction on the face $\Sigma_U \cap \Sigma_W$ of $\Sigma_V$. We then
label by $V$ an arbitrary point $p$ in the relative interior of the edge
of $\Sigma_V$ whose endpoints are labeled with $\{v_1\}$ and $\{v_d\}$
and define $\Gamma_V$ as the collection consisting of all simplices in
$\Gamma_U \cup \Gamma_W$ and the cones of these on the vertex $p$. We leave
it to the reader to verify that $\Gamma_V$ has properties (a), (b) and (c)
and that it realizes an abstract simplicial subdivision of $2^V$ with the
required properties.
\end{proof}

We now recall that $K_n$ consists of all chains of intervals of the form
$[A, B]$, where $\varnothing \ne A \subseteq B \subseteq [n]$. We define
the carrier of such a chain $\cC$ as the set of all endpoints of the intervals
in $\cC$ and note that this set is a chain in the poset $\fF(2^{[n]})$ of
nonempty subsets of $[n]$ and hence belongs to the barycentric subdivision
$\sd (2^{[n]})$. Applying Lemma~\ref{lem:Knbary} to an arbitrary chain $V \in
\sd (2^{[n]})$ we conclude that $K_n$ is a subdivision of $\sd (2^{[n]})$ and
that the restriction of this subdivision to a nonempty face $V \in \sd
(2^{[n]})$ of dimension $d-1$ has exactly $2^{d-1}$ facets.

\begin{lemma} \label{lem:2^n-1facets}
Let $\Gamma$ be a quasi-geometric simplicial subdivision of a $(d-1)$-dimensional
simplex $2^V$. If the restriction $\Gamma_F$ has exactly $2^{\dim(F)}$ facets
for every nonempty face $F$ of $2^V$, then
  $$ \ell_V (\Gamma, x) \ = \ \begin{cases}
                                  x^{d/2}, & \text{if $d$ is even} \\
                                  0, & \text{if $d$ is odd.}
                                  \end{cases} $$
\end{lemma}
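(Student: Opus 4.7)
The plan is to evaluate $\ell_V (\Gamma, x)$ at $x = 1$ directly from the defining formula (\ref{eq:deflocalh}) and then combine the answer with the symmetry and nonnegativity of the local $h$-polynomial provided by Theorem~\ref{thm:stalocal}. This lets me avoid computing the polynomial $\ell_V (\Gamma, x)$ coefficient by coefficient.

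First, I would note that for $F \subseteq V$ with $|F| = k \ge 1$, the restriction $\Gamma_F$ is pure of dimension $k - 1$ (a simplicial subdivision of a simplex is a simplicial ball of the same dimension), so $h (\Gamma_F, 1)$ equals the number of facets of $\Gamma_F$, which by hypothesis is $2^{k-1}$; for $F = \varnothing$, $\Gamma_\varnothing = \{\varnothing\}$ has $h$-polynomial $1$. Substituting into (\ref{eq:deflocalh}) gives
  $$ \ell_V (\Gamma, 1) \ = \ (-1)^d \, + \, \sum_{k=1}^{d} \, (-1)^{d-k} \binom{d}{k} 2^{k-1}. $$
A one-line binomial computation shows that the sum equals $\frac{1}{2}[(2-1)^d - (-1)^d]$, whence $\ell_V (\Gamma, 1) = \frac{1 + (-1)^d}{2}$, which is $0$ when $d$ is odd and $1$ when $d$ is even.

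I would then invoke Theorem~\ref{thm:stalocal}(c): since $\Gamma$ is quasi-geometric, $\ell_V (\Gamma, x)$ has nonnegative integer coefficients. When $d$ is odd these nonnegative integers sum to zero, so $\ell_V (\Gamma, x) \equiv 0$, as required. When $d$ is even they sum to $1$, hence exactly one coefficient equals $1$ and the others vanish; the symmetry $\ell_i = \ell_{d-i}$ from Theorem~\ref{thm:stalocal}(b) then forces the nonzero coefficient to sit in position $d/2$, yielding $\ell_V (\Gamma, x) = x^{d/2}$.

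The argument is essentially self-contained given the preceding machinery; the only conceptual hurdle is recognizing that one need not unpack $\ell_V (\Gamma, x)$ degree by degree. The facet-count hypothesis pins down $\ell_V (\Gamma, 1) \in \{0, 1\}$, and Stanley's symmetry and nonnegativity results then determine the polynomial on the nose.
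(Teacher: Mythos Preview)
Your proof is correct and follows essentially the same approach as the paper: evaluate $\ell_V(\Gamma,1)$ via the defining sum using the facet-count hypothesis, then appeal to the symmetry and nonnegativity of the local $h$-polynomial (Theorem~\ref{thm:stalocal}(b),(c)) to pin down the polynomial. The paper's proof is slightly terser but the argument is the same.
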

\begin{proof}
We recall that the number of facets of a simplicial complex $\Delta$ is equal
to the value of the $h$-polynomial $h(\Delta, x)$ at $x=1$. Thus, setting $x=1$
in the defining equation (\ref{eq:deflocalh}) and using the assumption on
$\Gamma$, we find that
  $$ \ell_V (\Gamma, 1) \ = \ (-1)^d \, + \, \sum_{k = 1}^d \ (-1)^{d - k} {d
     \choose k} \, 2^{k-1} \ = \ \begin{cases}
                                  1, & \text{if $d$ is even} \\
                                  0, & \text{if $d$ is odd}
                                  \end{cases} $$
and the result follows from parts (b) and (c) of Theorem~\ref{thm:stalocal}.
\end{proof}

\medskip
\noindent
\begin{proof}[First proof of Theorem~\ref{thm:localint}] Let us denote by
$\ell^+_n (x)$ the local $h$-polynomial of $K_n$. To compute this polynomial,
we will apply Proposition~\ref{prop:localrelformula} to $\Gamma' = K_n$ and
$\Gamma = \sd (2^{[n]})$. Let $E = \{S_1, S_2,\dots,S_k\}$ be a face of
$\Gamma$ with $k$ elements, where $S_1 \subset S_2 \subset \cdots \subset
S_k \subseteq [n]$ are nonempty sets. We have already noted that the
restriction $\Gamma'_E$ satisfies the assumptions of
Lemma~\ref{lem:2^n-1facets}. Thus, by Lemma~\ref{lem:2^n-1facets} we have
  \begin{equation} \label{eq:2^k-1facets}
    \ell_E (\Gamma'_E, x) \ = \ \begin{cases}
                                  x^{k/2}, & \text{if $k$ is even} \\
                                  0, & \text{if $k$ is odd.}
                                  \end{cases}
  \end{equation}
The relative local $h$-vector of $\Gamma$ was computed in
Example~\ref{ex:barycentrel}. Thus, in view of (\ref{eq:2^k-1facets}) and
(\ref{eq:barycentrel}), Proposition~\ref{prop:localrelformula} yields that
  $$ \ell^+_n (x) \ = \ \sum \ {n \choose r_0, r_1,\dots,r_k} \, x^{k/2} \,
     d_{r_0} (x) \, A_{r_1} (x) \cdots A_{r_k}(x), $$
where the sum ranges over all even numbers $k \in \NN$ and over all sequences
$(r_0, r_1,\dots,r_k)$ of nonnegative integers which sum to $n$. This equation
and (\ref{eq:main+}) imply that $\ell^+_n (x) = f^+_n (x)$ and the first
statement of Theorem~\ref{thm:localint} follows.

We leave to the reader to verify that $K_n$ can be obtained from the trivial
subdivision of the simplex by successive stellar subdivisions. This implies
that $K_n$ is a regular subdivision. The claim that $f^+_n (x)$ has nonnegative,
symmetric and unimodal coefficients follows from the main properties of local
$h$-polynomials \cite{Sta92} (see Theorem~\ref{thm:stalocal}).
Equation~(\ref{eq:localint+}) follows from the fact that $f^+_n (x) = \ell^+_n
(x)$, the defining equation (\ref{eq:deflocalh}) of local $h$-polynomials and
Proposition~\ref{prop:Knhpoly}. Given that $d^B_n (x) = f^+_n (x) + f^-_n (x)$
and $B_n (x) = B^+_n (x) + B^-_n (x)$ for every $n$, equation (\ref{eq:localint-})
is a consequence of (\ref{eq:dnB}) and (\ref{eq:localint+}).
\end{proof}

For the second proof of Theorem~\ref{thm:localint} we will need the exponential
generating functions of $B^+_n (x)$ and $B^-_n (x)$. These will be computed in
Section~\ref{sec:half}.

\medskip
\noindent
\begin{proof}[Second proof of Theorem~\ref{thm:localint}] Let us denote by
$\ell^+_n (x)$ and $\ell^-_n (x)$ the right-hand side of (\ref{eq:localint+})
and (\ref{eq:localint-}), respectively. Proposition~\ref{prop:Knhpoly} and
(\ref{eq:deflocalh}) imply that $\ell^+_n (x)$ is equal to the local
$h$-polynomial of $K_n$. Thus, we need to show that $\ell^+_n (x) = f^+_n
(x)$ and $\ell^-_n (x) = f^-_n (x)$ for every $n$. From the definition of
$\ell^+_n (x)$ and $\ell^-_n (x)$ and Proposition~\ref{prop:B+-nexp} we
get
  $$ \sum_{n \ge 0} \ \ell^+_n (x) \, \frac{t^n}{n!} \ = \ e^{-t} \
     \sum_{n \ge 0} \ B^+_n (x) \, \frac{t^n}{n!} \ = \ \frac{e^{xt} - xe^t}
     {e^{2xt} - xe^{2t}} $$
and
  $$ \sum_{n \ge 0} \ \ell^-_n (x) \, \frac{t^n}{n!} \ = \ e^{-t} \
     \sum_{n \ge 0} \ B^-_n (x) \, \frac{t^n}{n!} \ = \ \frac{x(e^t - e^{xt})}
     {e^{2xt} - xe^{2t}}. $$
The result follows from these equations and Proposition~\ref{prop:f+-exp}.
\end{proof}

\section{A decomposition of the Eulerian polynomial of type $B$}
\label{sec:half}

This section studies the decomposition of the Eulerian polynomial $B_n (x)$ as
a sum of $B^+_n (x)$ and $B^-_n (x)$. First, it is observed that a simple
relation between the two summands holds. Then, using the theory of local
$h$-vectors and results of Section~\ref{sec:geom}, a simple formula for $B^+_n
(x)$ in terms of the Eulerian polynomial $A_n (x)$ is proven
(Proposition~\ref{prop:Bn+formula}). From this formula, it is deduced that
$B^+_n (x)$ and $B^-_n (x)$ are real-rooted (Corollary~\ref{cor:B+-nrealroots}),
hence unimodal and log-concave, and a new proof of the unimodality of $B_n (x)$
is derived. Finally, recurrences and generating functions for $B^+_n (x)$ and
$B^-_n (x)$ are given. These lead to recurrences and generating functions for
$f^+_n (x)$ and $f^-_n (x)$ and to yet another proof of Theorem~\ref{thm:localint}.

We recall that $B^+_n (x)$ and $B^-_n (x)$ are defined by (\ref{eq:Bn+def})
and (\ref{eq:Bn-def}). For the first few values of $n$ we have
  $$ B^+_n(x) \ = \ \begin{cases}
    1, & \ \text{if \ $n=0$} \\
    1, & \ \text{if \ $n=1$} \\
    1 + 3x, & \ \text{if \ $n=2$} \\
    1 + 16x + 7x^2, & \ \text{if \ $n=3$} \\
    1 + 61x + 115x^2 + 15x^3, & \ \text{if \ $n=4$} \\
    1 + 206x + 1056x^2 + 626x^3 + 31x^4, & \ \text{if \ $n=5$} \\
    1 + 659x + 7554x^2 + 11774x^3 + 2989x^4 + 63x^5, & \ \text{if \ $n=6$}
  \end{cases} $$
and
  $$ B^-_n(x) \ = \ \begin{cases}
    0, & \ \text{if \ $n=0$} \\
    x, & \ \text{if \ $n=1$} \\
    3x + x^2, & \ \text{if \ $n=2$} \\
    7x + 16x^2 + x^3, & \ \text{if \ $n=3$} \\
    15x + 115x^2 + 61x^3 + x^4, & \ \text{if \ $n=4$} \\
    31x + 626x^2 + 1056x^3 + 206x^4 + x^5, & \ \text{if \ $n=5$} \\
    63x + 2989x^2 + 11774x^3 + 7554x^4 + 659x^5 + x^6, & \ \text{if \ $n=6$.}
  \end{cases} $$

\medskip
The previous data suggest the following statement.

\begin{lemma} \label{lem:rec}
We have $B^-_n (x) = x^n B^+_n (1/x)$ for $n \ge 1$.
\end{lemma}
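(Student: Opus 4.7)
The plan is to exhibit a single sign-changing involution that simultaneously swaps $B^+_n$ with $B^-_n$ and swaps $B$-descents with $B$-ascents. Given a signed permutation $w \in \mathfrak{S}(S)$ with $S = \{a_1, a_2, \dots, a_n\} \subseteq \Omega_n$ and $|a_i|=i$, let $-S := \{-a_1, \dots, -a_n\}$ and define $-w \in \mathfrak{S}(-S)$ by $(-w)(-a_i) = -w(a_i)$ for $1 \le i \le n$. Since each $-a_i$ still lies in $\{i,-i\}$, the set $-S$ is a legitimate choice of underlying set, so $-w$ is a bona fide element of $B_n$, and the map $w \mapsto -w$ is plainly an involution on $B_n$.

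Next I would verify that this map exchanges $B^+_n$ and $B^-_n$: if $w(a_n) > 0$, then the last entry of $-w$, namely $(-w)(-a_n) = -w(a_n)$, is negative, so $-w \in B^-_n$, and conversely. I would then check the effect on the $B$-descent set. Extending the convention $w(a_0)=0$ to $(-w)(-a_0)=0$, position $i \in \{0, 1, \dots, n-1\}$ is a $B$-descent of $-w$ iff $(-w)(-a_i) > (-w)(-a_{i+1})$ iff $w(a_i) < w(a_{i+1})$, which is precisely the condition that $i$ is a $B$-ascent of $w$. Hence
\[
   \des_B(-w) \ = \ \asc_B(w) \ = \ n - \des_B(w),
\]
the last equality because each of the $n$ positions $0, 1, \dots, n-1$ is either a $B$-descent or a $B$-ascent (the values $w(a_i)$ being distinct nonzero integers, equality is impossible).

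Summing over $B^-_n$ by parametrizing $w = -u$ for $u \in B^+_n$ now yields
\[
   B^-_n(x) \ = \ \sum_{w \in B^-_n} x^{\des_B(w)} \ = \ \sum_{u \in B^+_n} x^{n - \des_B(u)} \ = \ x^n\, B^+_n(1/x),
\]
as desired. There is no real obstacle here beyond keeping track of the paper's convention that $w$ lives in $\mathfrak{S}(S)$ for a variable $S$ rather than as a bijection of a fixed set; once the map $w \mapsto -w$ is defined to move $S$ to $-S$, everything goes through by direct inspection.
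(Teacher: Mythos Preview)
Your proof is correct and follows essentially the same approach as the paper: both use the sign-changing involution $w \mapsto -w$ that swaps $B^+_n$ with $B^-_n$ and converts $B$-ascents to $B$-descents, yielding $\des_B(-w) = n - \des_B(w)$. Your write-up is in fact more careful than the paper's about how the underlying set moves from $S$ to $-S$ under the involution.
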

\begin{proof}
Given a signed permutation $w = (w(a_1), w(a_2),\dots,w(a_n)) \in B_n$, where
the notation is as in Section~\ref{subsec:signed}, we set $-w := (-w(-a_1),
-w(-a_2),\dots,-w(-a_n)) \in B_n$. Then the induced map $\varphi: B^+_n \to
B^-_n$ defined by $\varphi(w) = -w$ is a bijection. Moreover, for every $w \in
B^+_n$, an index $i \in \{0, 1,\dots,n-1\}$ is a $B$-ascent of $w$ if and
only if $i$ is a $B$-descent of $\varphi(w)$ and the proof follows.
\end{proof}

To prove the formula for $B^+_n (x)$ promised, we will use the construction
of the $r$th edgewise subdivision $\Delta^{\langle r \rangle}$ of a simplicial
complex $\Delta$. We refer the reader to \cite{BR05, BW09} for the definition and
history of this subdivision and recall the following known facts. First, the
restriction $\Delta^{\langle r \rangle}_F$ of $\Delta^{\langle r \rangle}$ has
exactly $r^{\dim(F)}$ facets for every nonempty face $F \in \Delta$. Second,
combining \cite[Corollary~6.8]{BR05} with \cite[Corollary~1.2]{BW09}, one gets
the explicit formula
  \begin{equation} \label{eq:hedgewise}
    h(\Delta^{\langle r \rangle}, x) \ = \ {\rm E}_r \left( (1 + x + \cdots
    + x^{r-1})^d \, h (\Delta, x) \right)
  \end{equation}
for the $h$-polynomial of $\Delta^{\langle r \rangle}$, where $d-1$ is the
dimension of $\Delta$ and ${\rm E}_r$ is the operator on polynomials (more
generally, on formal power series) defined by
\[ {\rm E}_r \left( \, \sum_{k \ge 0} \, c_k x^k \right) \ = \ \sum_{k \ge 0} \,
    c_{rk} x^k \ = \ c_0 + c_r x + c_{2r} x^2 + \cdots. \]
Figure~\ref{fig:KK3} shows the second edgewise subdivision of the barycentric
subdivision of the 2-dimensional simplex.

  \begin{figure}
  \epsfysize = 1.5 in \centerline{\epsffile{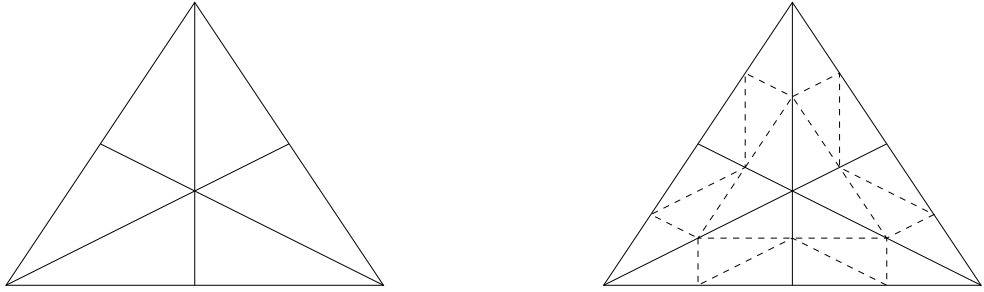}}
  \caption{The barycentric subdivision of the 2-simplex and its second
  edgewise subdivision $K'_3$}
  \label{fig:KK3}
  \end{figure}

\begin{proposition} \label{prop:Bn+formula}
We have $B^+_n (x) = {\rm E}_2 \left( (1 + x)^n A_n (x) \right)$ for every
$n \ge 1$.
\end{proposition}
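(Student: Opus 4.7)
The plan is to identify $K_n$ with the second edgewise subdivision of the barycentric subdivision $\sd(2^{[n]})$ of the $(n-1)$-dimensional simplex, and then to exploit the known formula (\ref{eq:hedgewise}) for the $h$-polynomial of an edgewise subdivision.

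More precisely, the first step is to show that $K_n$ coincides (as a simplicial subdivision of $2^{[n]}$) with $K'_n := (\sd(2^{[n]}))^{\langle 2 \rangle}$. The vertex set of $K_n$ consists of the intervals $[A,B]$ with $\varnothing \ne A \subseteq B \subseteq [n]$, while the vertices of $K'_n$ are naturally indexed by unordered pairs of (not necessarily distinct) vertices of $\sd(2^{[n]})$ lying in a common face, that is, by pairs $\{A, B\}$ of comparable nonempty subsets of $[n]$. The two indexing sets are in obvious bijection, and a direct inspection (guided by Figure~\ref{fig:KK3}) confirms that the incidences of the two subdivisions agree. Equivalently, one can verify that each facet of $K_n$, obtained as a maximal chain of intervals in $\sd_c(2^{[n]})$, is precisely one of the $2^{n-1}$ facets of the second edgewise subdivision of the corresponding maximal simplex of $\sd(2^{[n]})$, as enumerated in Lemma~\ref{lem:Knbary}.

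Once this identification is established, the second step is routine: the barycentric subdivision $\sd(2^{[n]})$ has dimension $n-1$ and its $h$-polynomial equals the Eulerian polynomial $A_n(x)$ (a classical result of Stanley which, incidentally, is equivalent to the $E = \varnothing$ case of Example~\ref{ex:barycentrel} combined with (\ref{eq:hformula})). Applying (\ref{eq:hedgewise}) with $\Delta = \sd(2^{[n]})$, $r = 2$ and $d = n$ yields
\[
  h(K_n, x) \ = \ h(K'_n, x) \ = \ {\rm E}_2\bigl( (1+x)^n \, A_n (x) \bigr).
\]
Combining with Proposition~\ref{prop:Knhpoly}, which says $h(K_n, x) = B^+_n(x)$, gives the desired formula.

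The only nontrivial step is the geometric identification $K_n = K'_n$; this is the main obstacle. The two subdivisions are constructed by apparently different recipes (one as an order complex of a poset of intervals, the other by a recursive edgewise bisection of a triangulation), so verifying their equality requires either an explicit poset isomorphism between their face posets or an inductive argument analogous to the one in the proof of Lemma~\ref{lem:Knbary}. Once this is in hand, everything else is a direct application of already-established machinery.
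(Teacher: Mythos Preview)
Your proposal rests on identifying $K_n$ with the second edgewise subdivision $K'_n = (\sd(2^{[n]}))^{\langle 2 \rangle}$, and you correctly flag this as the only nontrivial step. Unfortunately this identification is \emph{false}: the two complexes are not isomorphic already for $n=3$. To see this, compare the number of facets containing a given vertex. In $K'_3$, take the midpoint of the boundary edge of $\sd(2^{[3]})$ joining $\{1\}$ and $\{1,2\}$; this edge lies in a single triangle of $\sd(2^{[3]})$, and in the second edgewise subdivision of that triangle the midpoint of any edge lies in exactly three of the four sub-triangles. So this vertex lies in $3$ facets of $K'_3$. The corresponding vertex of $K_3$ is the interval $[\{1\},\{1,2\}]$; the maximal chains of $\sd_c(2^{[3]})$ through it are determined by one choice above (only $[\{1\},\{1,2,3\}]$) and one choice below (either $[\{1\},\{1\}]$ or $[\{1,2\},\{1,2\}]$), so this vertex lies in exactly $2$ facets of $K_3$. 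Hence $K_3 \not\cong K'_3$. Equivalently, the restriction of $K_n$ to a $2$-face $V$ of $\sd(2^{[n]})$ is the order complex $\Gamma_V$ of Lemma~\ref{lem:Knbary}, in which the vertex $V$ lies in all four facets; in the second edgewise subdivision of the same $2$-simplex no vertex lies in all four facets.

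The paper's proof avoids this obstacle altogether: it never claims that $K_n$ and $K'_n$ coincide. Instead it observes that \emph{both} subdivisions, viewed as refinements of $\sd(2^{[n]})$, have the property that their restriction to any nonempty face $F$ has exactly $2^{\dim(F)}$ facets. By Lemma~\ref{lem:2^n-1facets} this forces the local $h$-polynomials $\ell_F((K_n)_F,x)$ and $\ell_F((K'_n)_F,x)$ to agree for every $F$, and then Theorem~\ref{thm:stalocal}~(a) gives $h(K_n,x)=h(K'_n,x)$. So the paper matches $h$-polynomials term by term through the local decomposition, without any combinatorial isomorphism. Your second step (applying (\ref{eq:hedgewise}) and Proposition~\ref{prop:Knhpoly}) is exactly what the paper then does; it is only the first step that needs to be replaced by this local $h$-vector argument.
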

\begin{proof}
We consider the subdivision $K_n$ and the second edgewise subdivision $K'_n$
of the barycentric subdivision $\sd (2^{[n]})$ (see Figures~\ref{fig:K3} and
\ref{fig:KK3} for the special case $n=3$). Applying (\ref{eq:hformula}) for
$\Delta' = K_n$ or $K'_n$, respectively, and $\Delta = \sd (2^{[n]})$ we get

  \begin{eqnarray*}
    h(K_n, x) &=& \sum_{F \in \Delta} \ \ell_F ((K_n)_F, x) \, h
                  (\link_\Delta (F), x), \\
    h(K'_n, x) &=& \sum_{F \in \Delta} \ \ell_F ((K'_n)_F, x) \, h
                  (\link_\Delta (F), x).
  \end{eqnarray*}
Since both restrictions $(K_n)_F$ and $(K'_n)_F$ have exactly $2^{\dim(F)}$
facets for every nonempty face $F \in \Delta$, it follows from the previous
formulas and Lemma~\ref{lem:2^n-1facets} that $h(K_n, x) = h(K'_n, x)$.
Combining this equality with Proposition~\ref{prop:Knhpoly} we get $B^+_n
(x) = h(K'_n, x)$ for every $n \ge 1$. Formula (\ref{eq:hedgewise}) implies
that
  $$ B^+_n (x) \ = \ {\rm E}_2 \left( (1 + x)^n h (\Delta, x) \right) \ = \
     {\rm E}_2 \left( (1 + x)^n A_n (x) \right) $$
for $n \ge 1$ and the proof follows.
\end{proof}

\begin{remark} \label{rem:cry} \rm
We thank Mirk\'{o} Visontai for informing us that a formula similar to the one 
in Proposition~\ref{prop:Bn+formula} can be derived from \cite[Theorem~4.4]{ABR01}, 
for which a bijective proof was given in \cite{LP11}.
\end{remark}

We will use the following lemma to deduce the real-rootedness of $B^+_n (x)$
and $B^-_n (x)$.

\begin{lemma} \label{lem:Er}
Let $p(x)$ be a polynomial with real coefficients and let $r$ be a positive
integer.
  \begin{itemize}
    \item[(a)]
      If $p(x)$ has unimodal coefficients, then so does ${\rm E}_r (p(x))$.
    \item[(b)]
      If $p(x)$ has nonnegative and log-concave coefficients, with no internal
      zeros, then so does ${\rm E}_r (p(x))$.
    \item[(c)]
      If $p(x)$ is real-rooted, then so is ${\rm E}_r (p(x))$.
  \end{itemize}
\end{lemma}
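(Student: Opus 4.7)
The plan is to treat the three parts separately. Parts (a) and (b) are routine consequences of unpacking the definitions, and part (c) is the main point, handled via Edrei's criterion. For part (a), write $p(x) = \sum_{k \ge 0} c_k x^k$, unimodal with peak at index $j$, and set $m = \lfloor j/r \rfloor$. For $0 \le k < m$ both $rk$ and $r(k+1)$ lie in $[0, j]$, so $c_{rk} \le c_{r(k+1)}$; for $k \ge m+1$ both $rk$ and $r(k+1)$ lie in $[j, \infty)$, so $c_{rk} \ge c_{r(k+1)}$. Regardless of how $c_{rm}$ and $c_{r(m+1)}$ compare, the resulting subsequence is unimodal with peak at $m$ or $m+1$.

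For part (b), I would first record the standard strengthening of log-concavity: if $(c_k)$ is nonnegative, log-concave and has no internal zeros, then $c_i c_j \le c_{i+1} c_{j-1}$ whenever $i < j-1$ and the right-hand side is positive. This follows by iterating the fact that the ratios $c_{k+1}/c_k$ are weakly decreasing on the support of the sequence. Applying this inequality $r$ times to move from $(r(k-1), r(k+1))$ to $(rk, rk)$ gives the required log-concavity $c_{rk}^2 \ge c_{r(k-1)} c_{r(k+1)}$ of the subsequence. Nonnegativity transfers trivially, and the no-internal-zeros condition transfers because any internal zero $c_{rj} = 0$ of the subsequence, lying between nonzero $c_{rk}$ and $c_{rm}$, would also be an internal zero of the original sequence.

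For part (c), which is the substantive step, I would invoke Theorem~\ref{thm:realroots}. Writing $p(x) = \sum_{k \ge 0} c_k x^k$ with $c_k \ge 0$ (the only case needed in the applications), the theorem asserts that every minor of the infinite Toeplitz matrix $M = (c_{i-j})_{i,j \ge 0}$ is nonnegative. The Toeplitz matrix $M' = (c_{r(i-j)})_{i,j \ge 0}$ attached to ${\rm E}_r(p(x))$ coincides with the submatrix of $M$ obtained by selecting rows and columns at positions $0, r, 2r, \ldots$, so every minor of $M'$ is a minor of $M$ and hence nonnegative. Applying Theorem~\ref{thm:realroots} in the reverse direction to the nonnegative coefficients of ${\rm E}_r(p(x))$ then yields its real-rootedness. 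The only point requiring any thought is the Toeplitz submatrix observation just made; once it is spelled out, Edrei's criterion closes the argument, and parts (a) and (b) should present no difficulty.
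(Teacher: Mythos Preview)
Your proof is correct and follows essentially the same approach as the paper: parts (a) and (b) are dismissed in the paper as trivial and an exercise, respectively, while your argument for part (c)---observing that the Toeplitz matrix attached to ${\rm E}_r(p(x))$ is the submatrix of the Toeplitz matrix of $p(x)$ on row and column indices divisible by $r$, and then invoking Edrei's criterion (Theorem~\ref{thm:realroots})---is exactly the paper's argument. Your explicit restriction to nonnegative coefficients in part (c) matches the hypothesis of Theorem~\ref{thm:realroots} and is all that is used in the applications.
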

\begin{proof}
Part (a) is trivial and part (b) can be left as an excercise. For part (c)
we set $p(x) = \sum_{k \ge 0} a_k x^k$ and note that the matrix
$(a_{ri - rj})_{i,j=0}^\infty$ is a submatrix of $(a_{i - j})_{i,j=0}^\infty$.
Therefore, every minor of the former is also a minor of the latter and the
result follows from Theorem~\ref{thm:realroots}.
\end{proof}

\begin{corollary} \label{cor:B+-nrealroots}
The polynomials $B^+_n (x)$ and $B^-_n (x)$ are real-rooted for every $n \ge
1$. They are unimodal with peaks at $\lfloor n/2 \rfloor$ and $\lfloor (n+1)/2
\rfloor$, respectively, for every $n \ge 2$.
\end{corollary}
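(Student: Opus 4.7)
The plan is to deduce both real-rootedness and the peak locations from two results established earlier: the formula $B^+_n(x) = {\rm E}_2((1+x)^n A_n(x))$ from Proposition~\ref{prop:Bn+formula} and the reflection identity $B^-_n(x) = x^n B^+_n(1/x)$ from Lemma~\ref{lem:rec}.

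First I would settle real-rootedness. The Eulerian polynomial $A_n(x)$ is real-rooted (a classical fact recalled in the introduction), as is $(1+x)^n$. Since products of real-rooted polynomials with nonnegative coefficients remain real-rooted, the polynomial $(1+x)^n A_n(x)$ has only real roots. Lemma~\ref{lem:Er}(c) then gives real-rootedness of $B^+_n(x) = {\rm E}_2((1+x)^n A_n(x))$, and the identity $B^-_n(x) = x^n B^+_n(1/x)$ yields real-rootedness of $B^-_n(x)$ (its nonzero roots are reciprocals of those of $B^+_n(x)$, with possibly an additional root at $0$). Both polynomials have nonnegative coefficients, so the standard implication recalled just before Theorem~\ref{thm:realroots} yields unimodality and log-concavity.

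For the peak locations I would exploit symmetry. Since $A_n(x)$ is symmetric about $(n-1)/2$ by (\ref{eq:Angamma}) and $(1+x)^n$ is symmetric about $n/2$, the product $p(x) := (1+x)^n A_n(x)$ has degree $2n-1$ and is symmetric about $(2n-1)/2$. Being real-rooted with nonnegative coefficients, it is also unimodal, so its coefficient sequence $(a_0, a_1, \ldots, a_{2n-1})$ attains its maximum at the two central indices $n-1$ and $n$. The operator ${\rm E}_2$ retains only the even-indexed coefficients. A short case analysis on the parity of $n$ shows that whether $n$ is even (so the even index closest to $\{n-1, n\}$ is $n$) or odd (so the closest is $n-1$), this closest index is precisely $2\lfloor n/2 \rfloor$. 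Using the unimodality of $p(x)$ on either side of this index, one verifies that $b_{k} \le b_{k+1}$ for $k < \lfloor n/2 \rfloor$ and $b_{k} \ge b_{k+1}$ for $k \ge \lfloor n/2 \rfloor$, where $b_k$ denotes the coefficient of $x^k$ in $B^+_n(x)$. This identifies $\lfloor n/2 \rfloor$ as a peak of $B^+_n(x)$ for $n \ge 2$. Finally, since $B^-_n(x) = x^n B^+_n(1/x)$ reverses the coefficient sequence of $B^+_n(x)$, the peak moves from $\lfloor n/2 \rfloor$ to $n - \lfloor n/2 \rfloor = \lfloor (n+1)/2 \rfloor$, as claimed.

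The main obstacle is the parity-based identification of the peak after applying ${\rm E}_2$; once the symmetry and unimodality of $p(x)$ are in hand, it is a routine verification. Everything else follows directly from Lemma~\ref{lem:Er}, Lemma~\ref{lem:rec}, and the cited real-rootedness results.
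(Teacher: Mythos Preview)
Your proposal is correct and follows essentially the same route as the paper: real-rootedness via Proposition~\ref{prop:Bn+formula} and Lemma~\ref{lem:Er}(c), then Lemma~\ref{lem:rec} for $B^-_n(x)$; peak location via the symmetry and unimodality of $(1+x)^n A_n(x)$ together with Lemma~\ref{lem:Er}(a) and Lemma~\ref{lem:rec}. The only difference is that you spell out the parity analysis locating the peak of ${\rm E}_2\bigl((1+x)^n A_n(x)\bigr)$ explicitly, whereas the paper leaves this implicit in its invocation of Lemma~\ref{lem:Er}(a).
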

\begin{proof}
The first statement follows from Lemma~\ref{lem:rec},
Proposition~\ref{prop:Bn+formula} and the fact that the Eulerian polynomial
$A_n(x)$ is real-rooted for $n \ge 1$, via part (c) of Lemma~\ref{lem:Er}.
The second statement follows from Lemma~\ref{lem:rec},
Proposition~\ref{prop:Bn+formula} and the fact that $(1 + x)^n A_n (x)$ is
a polynomial of degree $2n-1$ with symmetric and unimodal coefficients, via
part (a) of Lemma~\ref{lem:Er}.
\end{proof}

\begin{remark} \label{rem:Bnformula} \rm
Since $B_n (x) = B^+_n (x) + B^-_n(x)$, Lemma~\ref{lem:rec} and
Proposition~\ref{prop:Bn+formula} express the Eulerian polynomial $B_n (x)$
as a sum of two unimodal polynomials with peaks which differ by at most one
(see Corollary~\ref{cor:B+-nrealroots}). This decomposition shows that the
unimodality of $B_n (x)$ is a consequence of the unimodality of $A_n (x)$.
For a $\gamma$-nonnegativity proof of the unimodality of $B_n (x)$, see
\cite[Proposition~4.16]{Pe07}. For an equation relating the Eulerian
polynomials of types $A$, $B$ and $D$, see \cite[Lemma~9.1]{Ste94}.
\end{remark}

We will now give recurrences and generating functions for $B^+_n (x)$ and
$B^-_n (x)$.

\begin{proposition} \label{prop:B+-nexp}
We have
  \begin{equation}\label{eq:B+nrec}
    B^+_n(x) \ = \ 2(n-1)x\, B^+_{n-1}(x) \, + \, 2x(1-x)\, \frac{\partial
                   B^+_{n-1}}{\partial x} (x) \, + \, B_{n-1}(x)
  \end{equation}
for every $n \ge 1$,
  \begin{equation}\label{eq:B+nexp}
    \sum_{n \ge 0} B^+_n(x) \frac{t^n}{n!} \ = \ \frac{e^t (e^{xt} - xe^t)}
                                                 {e^{2xt} - xe^{2t}}
\end{equation}
and
  \begin{equation}\label{eq:B-nexp}
    \sum_{n \ge 0} B^-_n(x) \frac{t^n}{n!} \ = \ \frac{xe^t (e^t - e^{xt})}
                                                 {e^{2xt} - xe^{2t}}.
\end{equation}
\end{proposition}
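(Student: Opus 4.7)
The plan is to establish the recurrence (\ref{eq:B+nrec}) combinatorially by inserting the letter of absolute value $n$ into a signed permutation of length $n-1$, and then to obtain the two exponential generating functions by combining Proposition~\ref{prop:Bn+formula} (which expresses $B^+_n(x)$ as ${\rm E}_2((1+x)^n A_n(x))$) with the symmetry $B^-_n(x) = x^n B^+_n(1/x)$ of Lemma~\ref{lem:rec} and the explicit form (\ref{eq:Anexp}) of the Eulerian generating function $\aA(t)$.

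To prove (\ref{eq:B+nrec}), I would partition $w \in B^+_n$ according to the position of the letter of absolute value $n$. If this letter equals $+n$ and occupies the last position, removing it produces an arbitrary $w' \in B_{n-1}$; the new last comparison can never create a descent, since $w'(a_{n-1}) < n$, so $\des_B(w) = \des_B(w')$, accounting for the summand $B_{n-1}(x)$. Otherwise $w$ is obtained by inserting $\pm n$ at some position $i \in \{1, \dots, n-1\}$ into a uniquely determined $w' \in B^+_{n-1}$ (whose last letter is preserved). A direct analysis of the two new comparisons at positions $i-1$ and $i$ of $w$ shows that, regardless of the sign of the inserted entry, exactly one of these two positions is a $B$-descent of $w$, while the comparison at position $i-1$ of $w'$ is no longer present; consequently $\des_B(w) = \des_B(w')$ if $i-1$ was a $B$-descent of $w'$, and $\des_B(w) = \des_B(w') + 1$ otherwise. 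Since each $w' \in B^+_{n-1}$ has $\des_B(w')$ positions of the former type and $n-1-\des_B(w')$ of the latter, and each allows two sign choices, summation produces the remaining summands $2(n-1) x\, B^+_{n-1}(x) + 2x(1-x)\, \frac{\partial B^+_{n-1}}{\partial x}(x)$.

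For (\ref{eq:B+nexp}) I would apply the operator ${\rm E}_2$ coefficient-wise in $x$ to
$$ \sum_{n \ge 1} (1+x)^n A_n(x)\, \frac{t^n}{n!} \ = \ \aA((1+x)t), $$
obtained via the substitution $u = (1+x)t$. The identity ${\rm E}_2(p(x)) = \tfrac{1}{2}(p(\sqrt{x}) + p(-\sqrt{x}))$ reduces this to evaluating (\ref{eq:Anexp}) at $\pm \sqrt{x}$ and adding the two fractions; after placing them over the common denominator $(e^{xt} - \sqrt{x}\, e^t)(e^{xt} + \sqrt{x}\, e^t) = e^{2xt} - xe^{2t}$, the odd powers of $\sqrt{x}$ cancel and a short simplification yields $\sum_{n \ge 1} B^+_n(x)\, \frac{t^n}{n!} = \frac{(e^t - e^{xt})\, e^{xt}}{e^{2xt} - xe^{2t}}$, whence (\ref{eq:B+nexp}) follows upon adding $B^+_0 = 1$. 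Formula (\ref{eq:B-nexp}) is then deduced from Lemma~\ref{lem:rec} by writing $\sum_{n \ge 0} B^-_n(x)\, \frac{t^n}{n!} = F(1/x, xt) - 1$, where $F(x, t)$ denotes the EGF of $B^+_n(x)$, and simplifying.

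The main obstacle is the descent bookkeeping inside the combinatorial recurrence: one must verify that the two sign choices for the inserted entry contribute identically to the descent enumerator. The clean reason this holds, and the source of the factor $2$ in the recurrence, is that inserting $+n$ at position $i$ creates a descent exactly at position $i$ (and none at $i-1$) while inserting $-n$ does the opposite, so in both cases the net change in $\des_B$ reduces to whether $i-1$ was a descent of $w'$. The subsequent generating function manipulations are then essentially algebraic simplification.
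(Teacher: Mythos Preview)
Your combinatorial proof of the recurrence (\ref{eq:B+nrec}) is essentially the paper's argument: both insert the letter of absolute value $n$ into a signed permutation of length $n-1$, isolate the case where $+n$ lands in the last position (contributing $B_{n-1}(x)$), and observe that for interior insertions the two sign choices give the same descent change, producing the factor $2$.

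Your derivation of the generating functions is correct but genuinely different from the paper's.  The paper uses the recurrence (\ref{eq:B+nrec}) just established to prove, by induction on $n$, the power-series identity
\[
\frac{B^+_n(x)}{(1-x)^n} \ = \ \sum_{i \ge 0}\bigl((2i+1)^n - (2i)^n\bigr)x^i,
\]
and then sums over $n$ to obtain (\ref{eq:B+nexp}); this is self-contained once the recurrence is in hand.  You instead invoke Proposition~\ref{prop:Bn+formula} (the edgewise-subdivision identity $B^+_n(x) = {\rm E}_2\bigl((1+x)^nA_n(x)\bigr)$), apply ${\rm E}_2$ to the closed form $\aA\bigl((1+x)t\bigr)$, and simplify.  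One point worth spelling out: substituting $x\mapsto\pm\sqrt{x}$ in $\aA\bigl((1+x)t\bigr)$ turns the denominator into $e^{(x\pm\sqrt{x})t}\mp\sqrt{x}\,e^{(1\pm\sqrt{x})t}$, and the factor $e^{\pm\sqrt{x}\,t}$ cancels between numerator and denominator, leaving $\dfrac{e^t - e^{xt}}{e^{xt}\mp\sqrt{x}\,e^t}$ --- this cancellation is what makes your common-denominator step work as stated.  Your route is quicker but imports the geometric input of Proposition~\ref{prop:Bn+formula}, whereas the paper's route keeps the proposition independent of the subdivision machinery (and in fact makes the recurrence (\ref{eq:B+nrec}) do real work, rather than being a standalone byproduct).
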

\begin{proof}
Let $w = u_1 u_2 \cdots u_{n-1} \in B_{n-1}$ be a signed permutation,
represented as a word. For $i \in \{1, 2,\dots,n\}$, we will denote by $w_i$
(respectively, $w_{-i}$) the signed permutation in $B_n$ obtained from $w$
by inserting $n$ (respectively, $-n$) between $u_{i-1}$ and $u_i$. For $1
\le i \le n-1$ we have $w_i \in B^+_n$ (respectively, $w_{-i} \in B^+_n$) if
and only if $w \in B^+_{n-1}$. On the other hand, $w_n \in B^+_n$ and $w_{-n}
\in B^-_n$ for every $w \in B_{n-1}$. Moreover, for $1 \le i \le n-1$ we have
\begin{equation}
  \des_B(w_{\pm i}) \ = \ \begin{cases}
  \des_B(w), & \textrm{if $i-1 \in \Des_B(w)$}\\
  \des_B(w) + 1, & \textrm{if $i-1 \notin \Des_B(w)$}
  \end{cases} \nonumber
\end{equation}
and $\des_B(w_n) \ = \ \des_B(w)$. Thus, we compute that

\begin{eqnarray*}
B^+_n(x) & = & \sum_{\sigma \in B^+_n} x^{\des_B(\sigma)} \ = \ \sum_{i=1}^{n-1}
\left(\sum_{w \in B^+_{n-1}} x^{\des_B(w_i)} \, + \, x^{\des_B (w_{-i}) } \right)
+ \sum_{w\in B_{n-1}} x^{\des_B(w_n)} \\
& & \\
& = & 2 \sum_{w \in B^+_{n-1}} \left( \des_B(w) \, x^{\des_B(w)} + (n-1-\des_B(w))
\, x^{\des_B(w) + 1} \right) \ + \ B_{n-1}(x) \\
& & \\
& = & 2(n-1) \sum_{w \in B^+_{n-1}} x^{\des_B(w) + 1} \ + \ 2 (1-x) \sum_{w \in
B^+_{n-1}} \des_B(w) \, x^{\des_B(w)} \ + \ B_{n-1}(x) \\
& & \\
& = & 2(n-1) x \, B^+_{n-1}(x) \ + \ 2x(1-x)\, \frac{\partial B^+_{n-1}}{\partial
x}(x) \ + \ B_{n-1}(x),
\end{eqnarray*}

\medskip
\noindent
which proves (\ref{eq:B+nrec}). We now claim that
  \begin{equation} \label{eq:ratB+n}
    \frac{B^+_n(x)}{(1-x)^n} \ = \ \sum_{i \ge 0} \ \left( (2i+1)^n - (2i)^n \right)
    \, x^i.
  \end{equation}
Given that $B^+_0 (x) = 1$, equation (\ref{eq:B+nexp}) then follows by
straightforward computations. To prove (\ref{eq:ratB+n}), denote by $a_n(i)$ the
coefficient of $x^i$ in the expansion of $B^+_n(x)/(1-x)^n$ as a formal power
series. Dividing (\ref{eq:B+nrec}) by $(1-x)^n$ and using the equality
  $$ \frac{\partial}{\partial x} \left( \frac{B^+_{n-1}(x)}{(1-x)^{n-1}} \right)
     \ = \ \frac{{\displaystyle \frac{\partial B^+_{n-1}} {\partial x}(x)}}
     {(1-x)^{n-1}} \ + \ (n-1) \, \frac{B^+_{n-1}(x)}{(1-x)^{n}} $$
we find that
  $$ \frac{B^+_n(x)}{(1-x)^n} \ = \ 2x \ \frac{\partial}{\partial x} \left(
     \frac{B^+_{n-1}(x)}{(1-x)^{n-1}}\right) \ + \ \frac{B_{n-1}(x)}{(1-x)^n}.
  $$
Comparing the coefficients of $x^i$ in the two sides of the previous equation and
using \cite[Theorem~3.4~(ii)]{Bre94}, we get $a_n(i) = 2i a_{n-1}(i) + (2i+1)^{n-1}$.
The claim then follows by induction on $n$.

Equation (\ref{eq:B-nexp}) follows from (\ref{eq:B+nexp}) and Lemma~\ref{lem:rec}.
Alternatively, it follows from (\ref{eq:B-nexp}) and the formula for the
exponential generating function of $B_n (x)$ \cite[Theorem~3.4~(iv)]{Bre94}.
\end{proof}

We now deduce recurrence relations for $f^+_n (x)$ and $f^-_n (x)$.

\begin{proposition} \label{prop:recf+n}
For $n \ge 1$ we have
  $$ f^+_n (x) \ = \ (2(n-1)x-1)\, f^+_{n-1}(x) \, + \, 2x(1-x) \, \frac{\partial
     f^+_{n-1}} {\partial x}(x) \, + \, 2(n-1) x\, f^+_{n-2}(x) \, + \, d^B_{n-1} (x).
  $$
\end{proposition}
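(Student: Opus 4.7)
The plan is to lift the recurrence (\ref{eq:B+nrec}) for $B^+_n (x)$ to a recurrence for $f^+_n (x)$ by exploiting the binomial transform relation $f^+_n (x) = \sum_{k=0}^n (-1)^{n-k} \binom{n}{k} B^+_k (x)$ from (\ref{eq:localint+}). Writing $F(t)$, $G(t)$, $D(t)$, $B(t)$ for the exponential generating functions of $f^+_n (x)$, $B^+_n (x)$, $d^B_n (x)$, $B_n (x)$ respectively, this identity and its analogue (\ref{eq:dnB}) translate into $G(t) = e^t F(t)$ and $B(t) = e^t D(t)$.

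The first step is to convert the recurrence (\ref{eq:B+nrec}) into a partial differential equation. Multiplying by $t^{n-1}/(n-1)!$ and summing over $n \ge 1$ yields
$$ (1 - 2xt) \, G_t(t) \, - \, 2x(1-x) \, G_x(t) \ = \ B(t), $$
where subscripts denote partial differentiation with respect to $t$ or $x$. Substituting $G = e^t F$ and $B = e^t D$ (so that $G_t = e^t (F + F_t)$ and $G_x = e^t F_x$) and canceling $e^t$ produces
$$ (1 - 2xt)(F + F_t) \, - \, 2x(1-x) \, F_x \ = \ D. $$

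To finish, I would extract the coefficient of $t^{n-1}/(n-1)!$ from both sides of this PDE for each $n \ge 1$. Since the coefficient of $t^{n-1}/(n-1)!$ in $F_t$ is $f^+_n$ and multiplication by $t$ shifts the index by one while introducing the appropriate factor from the factorial normalization, the resulting identity reads
$$ (f^+_{n-1} + f^+_n) \, - \, 2(n-1) x \, (f^+_{n-2} + f^+_{n-1}) \ = \ 2x(1-x) \, \frac{\partial f^+_{n-1}}{\partial x} \, + \, d^B_{n-1}, $$
which rearranges at once to the claimed recurrence (with the convention $f^+_{-1} = 0$, which handles the case $n = 1$).

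Conceptually there is no real obstacle here: the translation between the recurrence form and the PDE form is a routine but careful manipulation of formal power series, and the exponential factor $e^t$ relating $G$ to $F$ implements the binomial transform in a transparent way. The only care needed is the bookkeeping of coefficients after the multiplications by $t$ and $1 - 2xt$; a quick sanity check against the values $f^+_0 = 1$, $f^+_1 = 0$, $f^+_2 = 3x$, $f^+_3 = 7x + 7x^2$ listed in Section~\ref{sec:intro} confirms that no sign or index has gone astray.
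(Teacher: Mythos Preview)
Your proof is correct and takes a genuinely different route from the paper's. The paper proceeds by direct manipulation of the alternating sum $\sum_{k=0}^n (-1)^{n-k}\binom{n}{k} B^+_k(x)$: it splits $\binom{n}{k}=\binom{n-1}{k-1}+\binom{n-1}{k}$, substitutes the recurrence (\ref{eq:B+nrec}) for $B^+_k(x)$ inside the sum, and then reorganizes a residual term $S_n(x)=2x\sum_{k=1}^n(-1)^{n-k}(k-1)\binom{n-1}{k-1}B^+_{k-1}(x)$ by further binomial identities until everything is expressed in terms of $f^+_{n-1}$, $f^+_{n-2}$ and $d^B_{n-1}$. Your approach instead packages the binomial transform as multiplication by $e^{\pm t}$ at the level of exponential generating functions and transports the PDE encoding (\ref{eq:B+nrec}) along that substitution. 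This is cleaner and more mechanical: the single substitution $G=e^tF$ replaces several rounds of index-shifting in the paper, and the coefficient extraction at the end is routine. The paper's computation, on the other hand, stays entirely at the level of coefficients and avoids introducing the auxiliary PDE, which some readers may find more self-contained. Both arguments rest on exactly the same two inputs, (\ref{eq:B+nrec}) and (\ref{eq:localint+}).
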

\begin{proof}
Using equation (\ref{eq:localint+}), we compute that
  \begin{eqnarray*}
    f^+_n (x) & = & \sum_{k=0}^n \, (-1)^{n-k} \binom{n}{k} \ B^+_k (x) \\
    & & \\
    &=& \sum_{k=1}^n \, (-1)^{n-k} \binom{n-1}{k-1} \ B^+_k (x) \ + \ \sum_{k=0}^{n-1}
    \, (-1)^{n-k} \binom{n-1}{k} \ B^+_k (x) \\
    & & \\
    &=& \sum_{k=1}^n \, (-1)^{n-k} \binom{n-1}{k-1} \ B^+_k(x) \ - \ f^+_{n-1} (x).
  \end{eqnarray*}
Substituting for $B^+_k(x)$ the right-hand side of (\ref{eq:B+nrec}), setting
  $$ S_n(x) \ = \ 2x \, \sum_{k=1}^n \, (-1)^{n-k} \, (k-1) \binom{n-1}{k-1} \
     B^+_{k-1}(x) $$
and using (\ref{eq:dnB}), we get
  \begin{eqnarray*}
    f^+_n(x) & = & S_n(x) \, + \, 2x(1-x) \ \sum_{k=1}^n \, (-1)^{n-k} \binom{n-1}{k-1} \,
    \frac{\partial B^+_{k-1}}{\partial x}(x) \, \\
    & & \\
    & & + \, \sum_{k=1}^n \, (-1)^{n-k} \binom{n-1}{k-1} \, B_{k-1}(x) \, - \, f^+_{n-1}
    (x) \\ & & \\
    &=& S_n(x) \, + \, 2x(1-x) \, \frac{\partial f^+_{n-1}}{\partial x}(x) \, + \,
    d^B_{n-1}(x) \, - \, f^+_{n-1}(x).
  \end{eqnarray*}
Finally, using equation (\ref{eq:localint+}), we compute that
\begin{eqnarray*}
  S_n(x) & = & 2x \sum_{k=1}^n \, (-1)^{n-k} \, k \, \binom{n-1}{k-1}  \,
  B^+_{k-1}(x) \, - \, 2x \sum_{k=1}^n \, (-1)^{n-k} \binom{n-1}{k-1} \,
  B^+_{k-1}(x) \\
  & & \\
  &=& 2x \sum_{k=1}^n \, (-1)^{n-k} \, k \,\binom{n}{k} \, B^+_{k-1}(x)
  \, - \, 2x \sum_{k=1}^{n-1} \, (-1)^{n-k} \, k \, \binom{n-1}{k} \, B^+_{k-1}(x) \\
  & & \\
  & & - \, 2x \, f^+_{n-1}(x) \\
  & & \\
  &=& 2nx \sum_{k=1}^n \, (-1)^{n-k} \binom{n-1}{k-1} \, B^+_{k-1}(x) \, - \,
  2(n-1)x \sum_{k=1}^n \, (-1)^{n-k} \binom{n-2}{k-1} \, B^+_{k-1}(x) \\
  & & \\
  & & - \, 2x \, f^+_{n-1}(x) \\
  & & \\
  &=& 2(n-1)x \, f^+_{n-1}(x) \, + \, 2(n-1)x \, f^+_{n-2}(x)
\end{eqnarray*}
and the proof follows.
\end{proof}

We will denote by $a^+_{n,k}$, $a^-_{n,k}$ and $d^B_{n,k}$ the coefficient of
$x^k$ in $f^+_n (x)$, $f^-_n (x)$ and $d^B_n (x)$, respectively. The following
recurrence relations can be derived from Proposition~\ref{prop:recf+n} and
\cite[Corollary~4.3]{CTZ09}.

\begin{corollary} \label{cor:ank}
For $n \ge 2$ and $k \ge 1$ we have
\begin{equation}\label{eq:a+nk}
a^+_{n,k} \ = \ (2k-1) a^+_{n-1,k} \, + \, 2(n-k) a^+_{n-1,k-1} \, + \, 2(n-1)
a^+_{n-2,k-1} \, + \, d^B_{n-1, k}
\end{equation}
and
\begin{equation}\label{eq:a-nk}
a^-_{n,k} \ = \ (2k-1) a^-_{n-1,k} \, + \, 2(n-k) a^-_{n-1,k-1} \, + \, 2(n-1)
a^-_{n-2,k-1} \, + \, d^B_{n-1, k-1}.
\end{equation}
\end{corollary}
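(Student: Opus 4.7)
The plan is direct: identity (\ref{eq:a+nk}) should fall out by reading off the coefficient of $x^k$ on both sides of the polynomial recurrence in Proposition~\ref{prop:recf+n}, and identity (\ref{eq:a-nk}) should then follow by subtracting (\ref{eq:a+nk}) from the coefficient form of \cite[Corollary~4.3]{CTZ09} and invoking $a^-_{n,k} = d^B_{n,k} - a^+_{n,k}$, which is immediate from the decomposition $d^B_n(x) = f^+_n(x) + f^-_n(x)$ in (\ref{eq:dnBsymsum}).

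For the first identity, I will expand each of the four summands on the right-hand side of Proposition~\ref{prop:recf+n} using the elementary rule $[x^k]\,2x(1-x)g'(x) = 2k\,[x^k]g(x) - 2(k-1)\,[x^{k-1}]g(x)$ applied to $g(x) = f^+_{n-1}(x)$. The four contributions to the coefficient of $x^k$ are $2(n-1)a^+_{n-1,k-1} - a^+_{n-1,k}$, $2k\,a^+_{n-1,k} - 2(k-1)\,a^+_{n-1,k-1}$, $2(n-1)\,a^+_{n-2,k-1}$, and $d^B_{n-1,k}$, and summing and collecting like terms produces (\ref{eq:a+nk}).

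For the second identity, the Chen--Tang--Zhao recurrence \cite[Corollary~4.3]{CTZ09} can be written in the form
$$ d^B_{n,k} \ = \ 2k\,d^B_{n-1,k} \, + \, (2(n-k)+1)\,d^B_{n-1,k-1} \, + \, 2(n-1)\,d^B_{n-2,k-1}. $$
Subtracting (\ref{eq:a+nk}) and substituting $d^B_{m,j} = a^+_{m,j} + a^-_{m,j}$ at every occurrence, the $a^+$ terms at levels $(n-1,k)$ and $(n-2,k-1)$ cancel outright, while the $a^-$ coefficients at these levels reassemble into $(2k-1)a^-_{n-1,k}$ and $2(n-1)a^-_{n-2,k-1}$. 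At level $(n-1,k-1)$ one obtains the aggregate $(2(n-k)+1)\bigl(a^+_{n-1,k-1} + a^-_{n-1,k-1}\bigr) - 2(n-k)a^+_{n-1,k-1}$, which after one further rewrite of the surplus $a^+_{n-1,k-1}$ as $d^B_{n-1,k-1} - a^-_{n-1,k-1}$ collapses to $d^B_{n-1,k-1} + 2(n-k)a^-_{n-1,k-1}$, matching the right-hand side of (\ref{eq:a-nk}).

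No serious difficulty is foreseen: the argument is mechanical bookkeeping. The one step to perform with care is the final exchange at level $(n-1,k-1)$, where the leftover $a^+$ contribution must be converted into the inhomogeneous term $d^B_{n-1,k-1}$ via the identity $a^+_{m,j} = d^B_{m,j} - a^-_{m,j}$ in order to obtain the stated form of (\ref{eq:a-nk}) rather than a mixed recurrence involving both $a^+$ and $a^-$ on the right-hand side.
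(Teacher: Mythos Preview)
Your proposal is correct and follows essentially the same approach as the paper: extract (\ref{eq:a+nk}) by comparing coefficients of $x^k$ in Proposition~\ref{prop:recf+n}, and derive (\ref{eq:a-nk}) by combining (\ref{eq:a+nk}) with the recurrence of \cite[Corollary~4.3]{CTZ09} via $a^-_{n,k}=d^B_{n,k}-a^+_{n,k}$. Your write-up simply makes the coefficient bookkeeping explicit where the paper leaves it to the reader.
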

\begin{proof}
Equation (\ref{eq:a+nk}) follows from the formula of
Proposition~\ref{prop:recf+n} by comparing the coefficients of
$x^k$. Since $a^-_{n, k} = d^B_{n, k} - a^+_{n, k}$, equation
(\ref{eq:a-nk}) follows from (\ref{eq:a+nk}) and the recurrence
relation for $d^B_{n, k}$ given in \cite[Corollary~4.3]{CTZ09}.
\end{proof}

\medskip
\noindent
\begin{proof}[Third proof of Theorem~\ref{thm:localint}] As in the second
proof, we denote by $\ell^+_n (x)$ and $\ell^-_n (x)$ the right-hand sides of
(\ref{eq:localint+}) and (\ref{eq:localint-}), respectively, and note that
$\ell^+_n (x) + \ell^-_n (x) = d^B_n (x)$ and that $\ell^+_n (x)$ is equal
to the local $h$-polynomial of $K_n$. In particular, we have $\ell^+_n (x)
= x^n \ell^+_n (1/x)$ by Theorem~\ref{thm:stalocal} (b). The proofs of
Proposition~\ref{prop:recf+n} and Corollary~\ref{cor:ank} show that the
coefficients of $\ell^+_n (x)$ and $\ell^-_n (x)$ satisfy (\ref{eq:a+nk}) and
(\ref{eq:a-nk}), respectively. Since $a^-_{n, k} = d^B_{n, k} - a^+_{n, k}$,
we may rewrite (\ref{eq:a+nk}) as
  $$ a^+_{n,k} \ = \ 2k a^+_{n-1,k} \, + \, 2(n-k) a^+_{n-1,k-1} \, + \, 2
     (n-1)a^+_{n-2,k-1} \, + \, a^-_{n-1, k}. $$
Switching $k$ to $n-k$ in this equality and using the symmetry $a^+_{n,k} =
a^+_{n,n-k}$ shows that
  $$ a^-_{n-1,k} \ = \ a^-_{n-1,n-k}. $$
Equivalently, we have $a^-_{n,k} = a^-_{n,n-k+1}$ for all $n$ and $k$ and
hence $\ell^-_n (x) = x^{n+1} \ell^-_n (1/x)$ for every $n \in \NN$. The
uniqueness of the defining properties of $f^+_n (x)$ and $f^-_n (x)$ shows
that $\ell^+_n (x) = f^+_n (x)$ and $\ell^+_n (x) = f^-_n (x)$ for every $n
\in \NN$.
\end{proof}

We have verified that $f^+_n (x)$ and $f^-_n (x)$ are real-rooted for $2 \le
n \le 10$. Thus, it is natural to conjecture the following statement.

\begin{conjecture} \label{conj:f+-n}
The polynomials $f^+_n (x)$ and $f^-_n (x)$ are real-rooted for every $n
\ge 2$.
\end{conjecture}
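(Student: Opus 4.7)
The plan is to attempt a proof by induction on $n$, using the three-term recurrence for $f^+_n(x)$ in Proposition~\ref{prop:recf+n} together with the analogous recurrence for $f^-_n(x)=d^B_n(x)-f^+_n(x)$ obtained from the recurrence for $d^B_n(x)$ in \cite{CTZ09}. The natural framework is the theory of interlacing (or \emph{compatible}) sequences of real-rooted polynomials, as developed by Wagner, Br\"and\'en, and Visontai; accordingly, the inductive hypothesis should be strengthened to assert precise interlacing patterns among $f^+_{n-1}$, $f^+_{n-2}$ and $d^B_{n-1}$ that the recurrence can propagate to $f^+_n$.

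First I would verify the base cases directly from the data in Section~\ref{sec:intro} and from the coefficient recursion of Corollary~\ref{cor:ank}. Next, I would analyze the first-order linear differential operator $T_n(p)(x) := (2(n-1)x-1)\, p(x) + 2x(1-x)\, p'(x)$ that appears on the right-hand side of the recurrence. Operators of this shape are well known to preserve real-rootedness in a precise sense (cf.\ classical results of Hermite and their refinements in the Laguerre--P\'olya theory), provided the roots of $p(x)$ lie in a controlled region such as $(-\infty,0]$, which is consistent with what is known for $f^+_{n-1}$ by Corollary~\ref{cor:main}. The remaining task is then to handle the additive contributions $2(n-1)x\, f^+_{n-2}(x)$ and $d^B_{n-1}(x)$, which is where the main difficulty lies: the sum of two real-rooted polynomials is generally not real-rooted, so one must exhibit a \emph{common interlacer} for the three summands appearing on the right-hand side.

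The main obstacle is exactly this control of the additive term $d^B_{n-1}(x)$: one would need to find a single polynomial --- a natural candidate is $f^+_{n-1}$ itself, or one of the half Eulerian polynomials $B^\pm_{n-1}$ --- which simultaneously interlaces $T_n(f^+_{n-1})(x)$, $2(n-1)x\, f^+_{n-2}(x)$ and $d^B_{n-1}(x)$, so that their sum is real-rooted via the standard lemma that polynomials sharing a common interlacer are closed under nonnegative linear combinations. An alternative strategy is multivariate: one could attempt to refine $f^+_n(x)$ by tracking additional statistics on the set $\dD^B_n \cap B^*_n$ from Proposition~\ref{prop:f+-interpret}, in the hope that the resulting multivariate polynomial is stable in the Borcea--Br\"and\'en sense and that real-rootedness of $f^+_n(x)$ follows on specialization. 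A third, more analytic route would exploit the exponential generating function in Proposition~\ref{prop:f+-exp}, whose denominator $e^{2xt}-xe^{2t}$ factors through the same structure as that of $d^B_n(x)$; locating the zeros of this transcendental function as a function of $x$ might yield, via Hermite--Biehler-type criteria, the desired real-rootedness directly. I expect that a combination of the compatibility-with-induction idea and a careful choice of common interlacer will be the most realistic path forward.
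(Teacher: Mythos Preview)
The paper does not prove this statement: it is recorded as Conjecture~\ref{conj:f+-n}, supported only by a numerical check for $2 \le n \le 10$. There is therefore no ``paper's own proof'' to compare against.

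Your proposal is not a proof either, and you are explicit about this. What you have written is a reasonable survey of the standard toolkit (interlacing/compatible sequences \`a la Wagner--Br\"and\'en--Visontai, multivariate stability, Hermite--Biehler via the exponential generating function), together with an honest identification of the central obstacle: in the recurrence of Proposition~\ref{prop:recf+n} the right-hand side is a sum of three terms, and real-rootedness of a sum requires a common interlacer that you do not produce. In particular, the operator $T_n(p)(x)=(2(n-1)x-1)p(x)+2x(1-x)p'(x)$ does preserve real-rootedness for polynomials with roots in $(-\infty,0]$, but the additive terms $2(n-1)x\,f^+_{n-2}(x)$ and $d^B_{n-1}(x)$ must then be shown to share an interlacer with $T_n(f^+_{n-1})$, and nothing in the paper (nor in your outline) supplies one. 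The candidates you name ($f^+_{n-1}$ or $B^\pm_{n-1}$) are plausible but unverified, and the required interlacing relations among $f^+_{n-2}$, $f^+_{n-1}$, $d^B_{n-1}$ would themselves have to be carried through the induction, which typically forces a substantially stronger hypothesis than the one you state.

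In short: the paper leaves this open, and your outline correctly locates the difficulty without resolving it. If you pursue the interlacing route, the first concrete step would be to conjecture and numerically test precise interlacing relations among $f^+_{n-1}$, $f^-_{n-1}$, $f^+_n$, $f^-_n$, $d^B_{n-1}$ (and possibly $B^\pm_n$) for small $n$, and only then attempt to propagate them through the recurrences of Proposition~\ref{prop:recf+n} and Corollary~\ref{cor:ank}.
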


\section*{Acknowledgments}
The authors wish to thank Ron Adin, Benjamin Nill, Yuval Roichman, John
Stembridge, Mirk\'{o} Visontai and Volkmar Welker for useful pointers to the 
literature. The second author also thanks Francesco Brenti and Mirk\'{o} 
Visontai for useful discussions. The second author was co-financed by the 
European Union (European Social Fund - ESF) and Greek national funds through 
the Operational Program ``Education and Lifelong Learning" of the National 
Strategic Reference Framework (NSRF) - Research Funding Program: Heracleitus 
II. Investing in knowledge society through the European Social Fund.

\end{document}